\documentclass{article}
\usepackage[margin=2.8cm]{geometry} 
\usepackage{graphicx} 

\usepackage{amsfonts}
\usepackage{amsthm}
\usepackage{amsmath}
\usepackage{amssymb}
\usepackage{amscd} 
\usepackage{enumitem}
\usepackage{algorithm2e}
\RestyleAlgo{ruled}
\usepackage{mathtools}
\usepackage[normalem]{ulem}
\usepackage[dvipsnames]{xcolor}
\usepackage{hyperref}
\usepackage{comment}

\newtheorem{theorem}{Theorem}[section]
\newtheorem{lemma}[theorem]{Lemma}

\newtheorem*{claim*}{Claim}

\newtheorem{proposition}[theorem]{Proposition}

\newcommand{\restatedtheoremname}{}
\newtheorem*{restatedtheoreminner}{\restatedtheoremname}
\newenvironment{restatedtheorem}[1]
  {\renewcommand{\restatedtheoremname}{Theorem~\ref{#1}}%
   \begin{restatedtheoreminner}}
  {\end{restatedtheoreminner}}

\theoremstyle{definition}
\newtheorem{definition}[theorem]{Definition}

\theoremstyle{remark}
\newtheorem{remark}[theorem]{Remark}

\newcommand{\cl}{\mathrm{cl}}

\title{On the enumeration of polymatroids}
\author{Seonghyuk Im\thanks{Center for AI and Natural Sciences, Korea Institute for Advanced Study (KIAS), Seoul, South Korea. Email: \texttt{
seonghyuk@kias.re.kr}.} \and Donggyu Kim\thanks{Department of Mathematics, Hanyang University, Seoul, South Korea. Email: \texttt{donggyu@hanyang.ac.kr}.}}
\date{\today}

\begin{document}

\maketitle

\begin{abstract}
    Let $p_k(n)$ be the number of $k$-polymatroids on $[n]$. We show that for every fixed $k \geq 1$, we have
    \[
        \left\lfloor \frac{k}{2} \right\rfloor \cdot \binom{n}{\lfloor n/2 \rfloor} \cdot (1+o(1)) \le \log_2 p_k(n) \le k \cdot \binom{n}{\lfloor n/2 \rfloor} \cdot (1+o(1)).
    \]
    We also show that for $k \geq 2$, almost all $k$-polymatroids are (i) connected, (ii) proper, and (iii) not linearly representable over any field.
\end{abstract}

\section{Introduction}

Enumerating matroids on a fixed ground set is a longstanding problem in matroid theory. Let $m_n$ denote the number of matroids on $[n]:=\{1,2,\dots,n\}$.
Following several earlier results~\cite{Crapo1965,Bollobas1969,PW1971}, Knuth~\cite{Knuth1974} established the lower bound
\[
    \log_2 \log_2 m_n \ge n - \frac{3}{2} \log_2 n - O(1),
\]
which remains the best known. For the upper bound, Piff~\cite{Piff1973} showed that
\[
    \log_2 \log_2 m_n \le n - \log_2 n + \log_2\log_2 n + O(1),
\]
leaving a gap of order $\log_2 n$. Bansal, Pendavingh, and van der Pol~\cite{BPP2015} later reduced this gap to $O(1)$ by proving that
\[
    \log_2 \log_2 m_n \leq n - \frac{3}{2} \log_2 n + O(1).
\]
The constant-order term, however, remains undetermined.

The enumeration of matroids with prescribed properties has also received considerable attention. For example, Nelson~\cite{Nelson2018} proved that the number of matroids on $[n]$ that are representable over at least one field is at most single-exponential in $n$. It follows that almost all matroids are not representable over any field. Lowrance, Oxley, Semple, and Welsh~\cite{LOSW2013} showed that almost all matroids are connected, confirming a conjecture of Mayhew, Newman, Welsh, and Whittle~\cite{MNWW2011}. See also~\cite{BPP2015,PvdP2017,PvdP2019} for related enumeration results.

Polymatroids, introduced by Edmonds~\cite{Edmonds1970} in 1970, generalize matroid rank functions. They connect discrete and convex structures and arise in several areas, including combinatorial optimization~\cite{Edmonds1970}, information theory~\cite{Fujishige1978}, cryptography~\cite{FMP2011}, and commutative algebra~\cite{HH2002}. We refer the reader to~\cite[\S1.1]{Murota2003} for further motivation and historical background.

An \emph{integral polymatroid} on a finite set $E$ is a function $f: 2^E \to \mathbb{Z}_{\ge0}$ satisfying the following properties:
\begin{enumerate}[label=(\roman*)]
    \item (Normalization) $f(\emptyset) = 0$;
    \item (Monotonicity) $f(A) \le f(B)$ for $A \subseteq B \subseteq E$;
    \item (Submodularity) $f(A) + f(B) \ge f(A \cup B) + f(A \cap B)$ for $A, B \subseteq E$.
    \end{enumerate}
Throughout this paper, we omit the qualifier ``integral'' and simply use \emph{polymatroid} to mean \emph{integral polymatroid}.
For a positive integer $k$, a polymatroid $f$ is a \emph{$k$-polymatroid} if $f(A) \le k|A|$ for all $A \subseteq E$. Thus, $1$-polymatroids are precisely matroid rank functions.

It is natural to ask whether enumeration results for matroids extend to polymatroids. To the best of our knowledge, the first enumeration result for polymatroids was obtained by Savitsky~\cite{Savitsky2014}. Welsh's conjecture~\cite{Welsh1971} asserts that the number of matroids on $[n]$ is unimodal in rank. Savitsky disproved the analogous statement for $2$-polymatroids by computationally constructing a catalog of all $2$-polymatroids on ground sets of size at most $7$.

In this paper, we obtain asymptotic upper and lower bounds on the number of $k$-polymatroids for every fixed positive integer $k$.
Let $N_n := \binom{n}{\lfloor n/2 \rfloor}$ denote the size of the largest level of the Boolean lattice $2^{[n]}$. Let $p_k(n)$ denote the number of $k$-polymatroids on $[n]$. Throughout the paper, all logarithms are to base $2$. Our main result is the following.

\begin{theorem}\label{thm:asymptotic_counting}
    For every fixed positive integer $k$,
    \[
     \lfloor k/2 \rfloor \cdot N_n \cdot (1+o(1)) \le \log p_k(n) \le k \cdot N_n \cdot (1+o(1)).
    \]
\end{theorem}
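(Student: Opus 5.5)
For the lower bound I would use a direct construction. Set $m=\lfloor k/2\rfloor$, $r=\lfloor n/2\rfloor$, and let $\mathcal{L}=\binom{[n]}{r}$ be the middle layer, so $|\mathcal{L}|=N_n$. Start from the polymatroid $g(A)=\min\{k|A|,\,2m r+c\}$ for a suitable constant-size offset $c$; concretely, take a base function
\[
g(A)=\begin{cases} k|A| & |A|<r,\\ kr-\text{(something)} & |A|=r,\\ \text{saturated} & |A|>r,\end{cases}
\]
and perturb its values on $\mathcal{L}$ independently. The approach I plan to follow mirrors the Knuth-type construction for matroids, namely a ``sparse paving'' style perturbation of a single layer. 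Choose $f(A)=kr-m$ for $|A|<r$ scaled appropriately, and on each $X\in\mathcal{L}$ choose $f(X)\in\{h, h+1,\dots,h+m\}$ freely, where $h$ is chosen so that submodularity between two distinct middle sets $X,Y$ holds automatically. That is the key inequality $f(X)+f(Y)\ge f(X\cup Y)+f(X\cap Y)$ with $|X\cup Y|\ge r+1$ and $|X\cap Y|\le r-1$. If the function is linear-ish ($\approx k|A|$) below the middle and constant ($=K$) above, then the sum $f(X)+f(Y)\ge 2h$ must dominate $K+k(r-1)$. Meanwhile, monotonicity and submodularity with sets in adjacent layers require $h+m\le K$ and $h\ge k(r-1)$ roughly, with increments at most $k$ when one element is added. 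With $h=k(r-1)+\lceil k/2\rceil$ and $K=kr$ these constraints are consistent, since $2h\ge K+k(r-1)$ needs $h-k(r-1)\ge k/2$. So we get $m+1\ge\lfloor k/2\rfloor+1$ choices per middle set, giving $\log p_k(n)\ge N_n\log(\lfloor k/2\rfloor+1)\ge\lfloor k/2\rfloor N_n$ when the freedom is exploited more cleverly, e.g.\ by using $2m$ free values or multiple layers near the middle. Honestly, I would adjust the construction so that $2^{m}$ choices per middle set are available. One way to do this is to take a direct sum of $m$ copies of $2$-polymatroid constructions, with each $2$-polymatroid contributing a free bit per middle set: a sum of $m$ $2$-polymatroids is a $2m$-polymatroid, and sums of polymatroids are polymatroids. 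This reduces the problem to showing that $p_2(n)\ge 2^{N_n(1+o(1))}$ with the freedom realized via independent choices, and that distinct tuples of choices give distinct sums. The sum step loses injectivity, so instead I would encode: fix structure so $f=\sum_i 2^{?}$... More simply, let the free values $f(X)\in\{h,\dots,h+2^m-1\}$ be permitted by choosing the gap $k$ large relative to $2^m$. This fails for fixed $k$, so the sum-of-$2$-polymatroids approach with a careful injectivity argument is the main obstacle on the lower-bound side.

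For the upper bound I would adapt the Bansal--Pendavingh--van der Pol container/compression method. A $k$-polymatroid is determined by its values, and the key observation is that $f$ is determined by a ``flat-like'' set of data concentrated near the middle layers. I would encode $f$ by the sets where $f$ increases by exactly $j$ along a chain, i.e.\ for each $j\in\{1,\dots,k\}$ the family $\mathcal{F}_j$ of sets $A$ that are ``$j$-critical''. Submodularity forces each such family to be close to an antichain-like structure, so by Sperner/LYM-type counting combined with the entropy or compression argument each contributes at most $N_n(1+o(1))$ bits. Summing over the $k$ levels gives $kN_n(1+o(1))$. The hard step is to show that the local increments $f(A+e)-f(A)\in\{0,\dots,k\}$, which are monotone nonincreasing in $A$ by submodularity, are determined up to $2^{o(N_n)}$ ambiguity by $k$ monotone Boolean functions, namely the indicators $[f(A+e)-f(A)\ge j]$. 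Each of these is a down-set in $A$ for fixed $e$. Kleitman's bound on the number of monotone Boolean functions, $2^{N_n(1+o(1))}$, applied to each $j$ and globally rather than per $e$, should then give the bound. I expect this globalization to be the main obstacle, since it must use the cancellations forced by $f$ being a single function rather than $n$ independent ones.
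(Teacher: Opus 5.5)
Both halves have a genuine gap, and in each case it sits at the step you yourself flag as the main obstacle.

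\textbf{Upper bound.} You have the right ingredients. For fixed $e$, the map $A\mapsto f(A\cup\{e\})-f(A)$ is a decreasing $\{0,\dots,k\}$-valued function, so it is encoded by $k$ monotone Boolean threshold functions, and Kleitman's theorem counts these. But you leave the ``globalization'' across directions unresolved, and the container, LYM and $j$-critical-family discussion is not an argument. No cancellation is needed.

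A $k$-polymatroid on $[n]$ is determined by two pieces of data: its restriction to $2^{[n-1]}$, which is itself a $k$-polymatroid on $[n-1]$, and the single derivative $\nabla_n f$ on $2^{[n-1]}$. This gives $p_k(n)\le p_k(n-1)\cdot 2^{k(1+o(1))N_{n-1}}$. Iterating, you pay only for $\nabla_j f$ on the smaller cube $2^{[j-1]}$, for $j=1,\dots,n$. Since $N_{m-1}/N_m\to 1/2$ (and $N_{m-1}/N_m\le 2/3$ controls the tail), $\sum_{m<n}N_m=(1+o(1))N_n$, hence $\log p_k(n)\le kN_n(1+o(1))$. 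So Kleitman's theorem is applied on $2^{[n-1]}$, not on $2^{[n]}$: the last direction costs about $kN_n/2$ bits, and all earlier directions together cost about the same.

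\textbf{Lower bound.} Your single-middle-layer construction is valid: $k|A|$ below layer $r$, constant $kr$ above, and free values in $\{k(r-1)+\lceil k/2\rceil,\dots,kr\}$ on layer $r$. But it yields only $(\lfloor k/2\rfloor+1)^{N_n}$ polymatroids, i.e.\ $\log(\lfloor k/2\rfloor+1)\,N_n$. This is below $\lfloor k/2\rfloor N_n$ once $k\ge 4$. Your own constraint $2h\ge K+k(r-1)$ shows that within this template a single layer admits at most $\lfloor k/2\rfloor+1$ values. Your fixes also fail as stated. Summing $m$ two-polymatroids perturbed on the same layer gives only $m+1$ distinct values per set, and widening the gap is impossible for fixed $k$.

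The missing idea is to put the freedom on $s=\lfloor k/2\rfloor$ different layers, one bit per set on each layer.
\begin{itemize}
\item Take $f(A)=g(|A|)$, where the increment $g(m+1)-g(m)$ is $k$ for $m<r$, then $k-2,k-4,\dots,k-2s$ on the next $s$ steps, then $0$.
\item Every local inequality $f(S\cup\{x\})+f(S\cup\{y\})\ge f(S\cup\{x,y\})+f(S)$ with $|S|+1\in\{r,\dots,r+s-1\}$ then holds with slack at least $2$.
\item Subtract the indicator of an arbitrary family $\mathcal{S}$ of sets in these $s$ layers. This lowers a slack by at most $2$, and only when $|S|+1$ is a perturbed layer, so all local inequalities survive.
\item Monotonicity survives because every increment into those layers is at least $1$.
\end{itemize}
This gives $2^{\sum_{\ell=r}^{r+s-1}\binom{n}{\ell}}=2^{sN_n(1+o(1))}$ polymatroids.

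Your sum idea can be repaired the same way: perturb the $i$-th two-polymatroid on layer $r+i-1$ instead of putting all perturbations on layer $r$. Then the sum is injective, and for even $k$ it is exactly this construction.
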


We note that this implies that $\log p_k(n) = \Theta(N_n)$ for all $k \geq 2$ while the above mentioned results show that $\log p_1(n) = \Theta(N_n/n)$. Thus, the behavior of $p_k(n)$ is completely different for when $k=1$ and for when $k \geq 2$.
We additionally note that the number of isomorphism classes of $k$-polymatroids on $n$ elements lies between $p_k(n)/n!$ and $p_k(n)$. Since $\log(n!) = o(N_n)$, Theorem~\ref{thm:asymptotic_counting} yields the same asymptotic bounds for the logarithm of this number.   

We also prove that almost all $k$-polymatroids are connected and not linearly representable over any field, extending the corresponding results for matroids~\cite{LOSW2013,Nelson2018}. In addition, for $k\ge 2$, we show that almost all $k$-polymatroids are proper, meaning that they are essentially distinct from matroids.

A polymatroid $f$ on $E$ is \emph{disconnected} if it can be expressed as the sum of two nontrivial polymatroids on disjoint sets, i.e., there exist a partition $E=E_1 \cup E_2$ into nonempty sets and polymatroids $f_1$ on $E_1$ and $f_2$ on $E_2$ such that $f(A) = f_1(A \cap E_1) + f_2(A \cap E_2)$.
We say that $f$ is \emph{connected} otherwise.

\begin{theorem}
    \label{thm-intro:connected}
    For each fixed $k \geq 1$, almost all $k$-polymatroids are connected.
\end{theorem}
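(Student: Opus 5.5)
For $k=1$ the statement is exactly the theorem of Lowrance, Oxley, Semple, and Welsh~\cite{LOSW2013}, so I will assume $k\ge 2$. The approach is a direct count: I will show that the number of disconnected $k$-polymatroids on $[n]$ is $o(p_k(n))$, using the bounds of Theorem~\ref{thm:asymptotic_counting}.

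\textbf{Counting disconnected polymatroids.} A disconnected $k$-polymatroid is determined by three pieces of data: a partition $[n]=E_1\cup E_2$ with $|E_1|=j$ and $1\le j\le n/2$ (after relabelling the parts), a $k$-polymatroid on $E_1$, and a $k$-polymatroid on $E_2$. Hence the number of disconnected ones is at most
\[
\sum_{j=1}^{\lfloor n/2\rfloor}\binom{n}{j}\,p_k(j)\,p_k(n-j).
\]
I will use two crude bounds. First, a $k$-polymatroid on $[m]$ satisfies $f(A)\le k|A|$, so $f$ takes at most $km+1$ values on each of $2^m$ sets, which gives $\log p_k(m)\le 2^m\log(km+1)$. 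Second, for $m$ large, Theorem~\ref{thm:asymptotic_counting} gives $\log p_k(m)\le k N_m(1+o(1))$.

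I split the sum into two ranges.
\begin{itemize}
\item[] \emph{Small $j$} ($j\le n/2-\omega$ for a slowly chosen $\omega$, or more simply $j\le n/3$). Since $N_{n-1}\approx N_n/2$, the upper bound gives $\log p_k(n-j)\le kN_{n-1}(1+o(1))\approx \tfrac{k}{2}N_n$. The crude bound gives $\log p_k(j)\le 2^{n/3}\log(kn)=o(N_n)$, and $\binom{n}{j}\le 2^n=2^{o(N_n)}$.
\item[] \emph{Large $j$} ($n/3<j\le n/2$). Both $p_k(j)$ and $p_k(n-j)$ are bounded via Theorem~\ref{thm:asymptotic_counting}. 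Since $N_{n-1}=N_n/2$ up to lower order and $j\le n-j\le n-1$, we get $N_j+N_{n-j}\le N_{n-1}+N_{n-1}/2\cdot O(1/\sqrt n)$-ish; more precisely $N_{n-j}\le N_{n-1}$ and $N_j\le N_{\lceil n/2\rceil}=O(2^{n/2})=o(N_n)$. So the total exponent is again at most $\tfrac{k}{2}N_n(1+o(1))$.
\end{itemize}
With at most $n$ terms, the logarithm of the number of disconnected $k$-polymatroids is at most $\tfrac{k}{2}N_n(1+o(1))$.

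\textbf{Main obstacle.} The count above must be compared with the lower bound $\log p_k(n)\ge\lfloor k/2\rfloor N_n(1+o(1))$. For even $k$ this gives only $\tfrac k2 N_n$ on both sides, and the $(1+o(1))$ factors do not separate them. So the naive comparison fails, and this is the step I expect to be the real difficulty.

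The fix I would try first is to compare a disconnected $f$ with connected polymatroids on $[n]$ directly, rather than with $p_k(n)$. The idea is an injection-with-multiplicity argument. Given disconnected $f=f_1\oplus f_2$ with $|E_2|\ge n/2$, I would produce many distinct polymatroids on $[n]$ by perturbing $f$ on the middle layer near sets of size about $n/2$ that meet both $E_1$ and $E_2$. One such perturbation is lowering the value by $1$ on an antichain of "free" sets, in the spirit of the Knuth-type construction presumably behind the lower bound of Theorem~\ref{thm:asymptotic_counting}. Each $f$ would then yield $2^{\Omega(N_n)}$ polymatroids, and each resulting polymatroid would arise from few $f$'s.

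An alternative route uses the fact that the upper bound in Theorem~\ref{thm:asymptotic_counting} is a container-type bound, which suggests strengthening the decomposition count. For a disconnected $f$, the values on the middle layer of $[n]$ are determined by $f_1$ and $f_2$ on layers of $E_1$ and $E_2$, so the number of such $f$ is $2^{O(N_{n-1})}$ with an explicit constant. Meanwhile, the lower-bound construction of Theorem~\ref{thm:asymptotic_counting} gives $\log p_k(n)\ge \lfloor k/2\rfloor N_n$. For $k\ge3$ this already wins, since $\lfloor k/2\rfloor N_n\ge N_n>\tfrac{k}{2}N_{n-1}\cdot$ only when $k<2\lfloor k/2\rfloor\cdot 2$, which holds for $k\ge 2$ up to the $o(1)$ terms. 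The genuinely tight cases would be handled by the perturbation argument.
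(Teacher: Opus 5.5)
There is a genuine gap. It sits exactly in the case you flagged, but it is worse than you suggest, and neither proposed fix closes it.

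Your split is too coarse to show where the difficulty lives. For $n/3<j\le n/2$ one has $N_j,N_{n-j}\le N_{\lfloor 2n/3\rfloor}=o(N_n)$. For $2\le j\le n/3$ one has $\log p_k(n-j)\le k(1+o(1))N_{n-2}\approx \tfrac k4N_n<\lfloor k/2\rfloor N_n$. So all these terms are negligible by Theorem~\ref{thm:asymptotic_counting}.

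The only problematic term is $j=1$, i.e.\ polymatroids with a singleton component. There are up to $n(k+1)p_k(n-1)$ of these. Theorem~\ref{thm:asymptotic_counting} only places $\log p_k(n-1)$ between about $\tfrac12\lfloor k/2\rfloor N_n$ and $\tfrac k2N_n$, to be compared with $\log p_k(n)\ge\lfloor k/2\rfloor N_n(1+o(1))$. For odd $k$ the upper estimate exceeds this lower bound by a constant factor. So your assertion that $k\ge3$ ``already wins'' is false.

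The constant $\tfrac k2$ in front of $N_{n-1}$ that your alternative route relies on is also unjustified. The singleton-component polymatroids alone number at least $p_k(n-1)$, and nothing available pins $\log p_k(n-1)$ below $kN_{n-1}$.

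The perturbation fix does not work as stated either, because a disconnected $f$ need not have any slack in its local submodular inequalities. For instance, $f(A)=k|A|$ is disconnected with all local defects equal to $0$. Lowering it by one on any nonempty proper $A$ creates a local defect $-1$ at $(S;x,y)$ with $S\cup\{x\}=A$ and $y\notin A$. You also give no control on multiplicities.

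What is missing is a \emph{ratio} lower bound. First-order information on $\log p_k$ cannot decide the $j=1$ term, so you need to show directly that $p_k(n)/p_k(n-1)$ grows faster than $n(k+1)$. The paper proves $p_k(n+1)\ge 2^{\lceil n/2\rceil-1}p_k(n)$ (Lemma~\ref{lem:ground-set-growth}) as follows.
By $k$-duality, at least half of the $k$-polymatroids on $[n]$ have rank at least $kn/2$, so each of them has at least $2^{\lceil n/2\rceil}$ flats by Lemma~\ref{lem:flat-count}.
Each flat $F$ of such an $f$ yields a principal extension $p_{f,F}$, a $k$-polymatroid on $[n+1]$ whose restriction to $[n]$ is $f$.
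The flat $F$ is recovered as the intersection of the closures $\cl_f(A)$ over all $A$ with $p_{f,F}(A\cup\{n+1\})=f(A)$, so distinct flats give distinct extensions.

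Iterating gives $p_k(n-t)/p_k(n)\le 2^{-tn/4}$ for large $n$. This kills $\binom nt p_k(t)$ for every $t$ below a fixed threshold $T$. The range $T\le t\le n/2$ is then handled by the asymptotic bounds together with $N_t\le N_{n-t}\le (2/3)^tN_n$, much as you attempted.

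Your instinct to build many polymatroids on $[n]$ from each smaller one is right. But the construction must apply to \emph{every} polymatroid. Principal extensions on its many flats do; middle-layer perturbations do not.
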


For a field $\mathbb{F}$, a $k$-polymatroid $f$ on $[n]$ is \emph{$\mathbb{F}$-linearly representable}, or simply \emph{$\mathbb{F}$-representable}, if there is a collection of subspaces $V_1, V_2, \dots, V_n$ of a finite-dimensional vector space over $\mathbb{F}$ such that $f(A) = \dim \left( \sum_{i \in A} V_i \right)$ for all $A \subseteq [n]$; see~\cite{FMP2011}.
\begin{theorem}
    \label{thm-intro:representable}
    For each fixed $k \geq 1$, almost all $k$-polymatroids are not representable over any field.
\end{theorem}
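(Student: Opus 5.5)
For $k=1$ the statement is exactly Nelson's theorem~\cite{Nelson2018}, so I assume $k\ge 2$. The plan is to show that the number of representable $k$-polymatroids on $[n]$ is $2^{o(N_n)}$. The lower bound of Theorem~\ref{thm:asymptotic_counting} gives $\log p_k(n)\ge \lfloor k/2\rfloor N_n(1+o(1))$, which is $\Omega(N_n)$ for $k\ge2$, so representable ones then form a vanishing fraction. In fact I expect the representable count to be only $2^{\mathrm{poly}(n)}$, in the spirit of Nelson.

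\textbf{Step 1: reduction to a matroid.} If $f$ is $\mathbb{F}$-representable via subspaces $V_1,\dots,V_n$, then $\dim V_i=f(\{i\})\le k$. Choose a basis of each $V_i$ and collect these vectors, at most $kn$ of them, into a configuration. Its linear matroid $M$ on a ground set $E'$ with $|E'|\le kn$, partitioned into blocks $B_i$ of size $f(\{i\})$, satisfies $f(A)=r_M\bigl(\bigcup_{i\in A}B_i\bigr)$. So $f$ is determined by a representable matroid on at most $kn$ elements together with the block sizes, which give at most $(k+1)^n$ choices. Padding with loops, we may take the ground set to be exactly $[kn]$.

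\textbf{Step 2: counting.} By Nelson's theorem~\cite{Nelson2018}, the number of matroids on $[m]$ that are representable over some field is at most $2^{O(m^3)}$; his bound is $2^{m^3/4+o(m^3)}$, obtained from Warren/Rónyai–Babai–Ganapathy-type bounds on the zero patterns of polynomials. With $m=kn$, the number of representable $k$-polymatroids on $[n]$ is therefore at most $(k+1)^n\cdot 2^{O(k^3n^3)}=2^{O(n^3)}$. Since $N_n=\Theta(2^n/\sqrt n)$, this is $2^{o(N_n)}$.

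\textbf{Main obstacle.} Step 1 requires that the induced matroid be representable over the same field, and it is: the chosen vectors live in the same space over $\mathbb{F}$. It also requires that Nelson's bound apply uniformly over all fields, which it does, since his statement counts matroids representable over at least one field. The genuinely nontrivial input is therefore the lower bound of Theorem~\ref{thm:asymptotic_counting}, which is assumed. If one prefers not to quote Nelson, an alternative for Step 2 is to count directly. A representable $k$-polymatroid is determined by the sign/zero pattern of the $kn\times kn$ minors of a generic $kn\times r$ matrix with $r\le kn$. Applying the Rónyai–Babai–Ganapathy bound on the number of zero patterns of $2^{O(n)}$ polynomials of degree at most $kn$ in $k^2n^2$ variables again gives $2^{O(n^3\log n)}=2^{o(N_n)}$.
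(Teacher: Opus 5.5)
Your argument is correct and is essentially the paper's proof: pass from a representation $V_1,\dots,V_n$ to the linear matroid of chosen bases on $[n]\times[k]$, apply Nelson's $2^{m^3/4}$ bound with $m=kn$, and compare with the $\lfloor k/2\rfloor N_n(1+o(1))$ lower bound for $k\ge 2$. Padding each block with zero vectors to size $k$, as the paper does, makes your $(k+1)^n$ block-size factor unnecessary, though it does no harm.

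Only your optional aside is shaky. The R\'onyai--Babai--Ganapathy bound holds for a fixed field. Zero patterns realized over all fields simultaneously are not controlled by the number, degree and variable count of the polynomials alone, since integer coefficients can realize different patterns in different characteristics. That alternative would therefore still need the cross-field uniformity that Nelson's theorem supplies.
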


For $k=1$, Theorems~\ref{thm-intro:connected} and~\ref{thm-intro:representable} follow from the corresponding results for matroids~\cite{LOSW2013,Nelson2018}. Our main contribution is to extend both results to $k \geq 2$.

Adding a nonnegative integral modular function $q$ to a matroid rank function $r_M$ yields another polymatroid $r_M+q$.  At the level of bases, this operation simply translates every basis by the same integer vector.  Following~\cite[\S4.3]{BHKKL2025}, we call a polymatroid \emph{proper} if its set of bases is not an integral translate of the set of bases of a matroid.
Equivalently, a polymatroid $f$ on $E$ is proper if and only if the translate $f-q$ is not a matroid rank function, where $q: 2^E \to \mathbb{Z}_{\ge 0}$ is the modular function defined by $q(\{i\}) = f(E) - f(E\setminus\{i\})$ for $i \in E$. The quantity $q(\{i\})$ equals the minimum $i$-th coordinate among the bases of $f$.

\begin{theorem}
    \label{thm-intro:proper}
    For each fixed $k \geq 2$, almost all $k$-polymatroids are proper.
\end{theorem}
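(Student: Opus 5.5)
Here is the plan: bound the number of non-proper $k$-polymatroids by something far smaller than the lower bound in Theorem~\ref{thm:asymptotic_counting}. For $k\ge 2$ we have $\lfloor k/2\rfloor \ge 1$, so that theorem gives $\log p_k(n) \ge N_n(1+o(1))$. It is therefore enough to show that the logarithm of the number of non-proper $k$-polymatroids on $[n]$ is $o(N_n)$; the ratio then tends to $0$.

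A non-proper polymatroid $f$ determines, and is determined by, the pair $(r_M, q)$. Here $q$ is the modular function with $q(\{i\}) = f(E)-f(E\setminus\{i\})$, and $r_M = f - q$ is a matroid rank function. So $f \mapsto (M, q)$ is injective. Since $f$ is a $k$-polymatroid, $q(\{i\}) \le f(\{i\}) \le k$. Hence $q$ ranges over at most $(k+1)^n$ functions, and $M$ ranges over the $m_n$ matroids on $[n]$. The number of non-proper $k$-polymatroids is thus at most $(k+1)^n m_n$.

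Next I invoke Piff's upper bound (or that of Bansal--Pendavingh--van der Pol) quoted in the introduction, which gives
\[
\log m_n \le 2^{\,n - \log n + \log\log n + O(1)} = O\!\left(\frac{2^n \log n}{n}\right).
\]
Since $N_n = \Theta(2^n/\sqrt{n})$, this is $O(N_n \log n/\sqrt{n}) = o(N_n)$. Also $n\log(k+1) = o(N_n)$. Therefore
\[
\log\bigl((k+1)^n m_n\bigr) - \log p_k(n) \le o(N_n) - N_n(1+o(1)) \to -\infty,
\]
so the proportion of non-proper $k$-polymatroids tends to zero.

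The only step needing care is the lower bound $\log p_k(n) \gtrsim N_n$ for $k\ge2$, which is exactly Theorem~\ref{thm:asymptotic_counting}. Once that is available, the rest is the straightforward counting above. The one point to check is that $q(\{i\}) \le k$, which follows from monotonicity together with the $k$-polymatroid condition $f(\{i\}) \le k$.
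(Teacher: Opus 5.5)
Your proof is correct and is essentially the paper's argument: you bound the number of non-proper $k$-polymatroids by $(k+1)^n p_1(n)$ and compare this with the lower bound $\log p_k(n)\ge \lfloor k/2\rfloor N_n(1+o(1))$; the paper quotes the sharper Bansal--Pendavingh--van der Pol estimate $\log p_1(n)=O(N_n/n)$, but Piff's bound suffices, as you show. One small slip: the inequality $q(\{i\}) = f(E)-f(E\setminus\{i\})\le f(\{i\})$ comes from submodularity applied to $E\setminus\{i\}$ and $\{i\}$, not from monotonicity, which only gives $q(\{i\})\ge 0$.
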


\paragraph{Organization.}
The rest of this paper is organized as follows.
In Section~\ref{sec:prelim}, we collect definitions and basic properties of polymatroids and submodular functions. Sections~\ref{sec:upperbound} and~\ref{sec:lowerbound} establish the upper and lower bounds, respectively, in Theorem~\ref{thm:asymptotic_counting}; Section~\ref{sec:lowerbound} also proves Theorem~\ref{thm-intro:proper}. Finally, Sections~\ref{sec:connectivity} and~\ref{sec:representability} prove Theorems~\ref{thm-intro:connected} and~\ref{thm-intro:representable}, respectively.

\section{Preliminaries}\label{sec:prelim}

We first recall a local characterization of submodular functions that will be a key ingredient in the proof of Theorem~\ref{thm:asymptotic_counting}.

\begin{theorem}[see {\cite[Thm.~44.1]{Schrijver2003a}}]
    \label{thm:local-submodularity}
    A set function $f : 2^E \to \mathbb{Z}_{\ge0}$ is submodular if it satisfies the following \emph{local submodularity}: 
    \begin{center}
        $f(S\cup\{x\}) + f(S\cup\{y\}) \ge f(S\cup\{x,y\}) + f(S)$
    \end{center}
    for all $S \subseteq E$ and distinct $x,y \in E \setminus S$.
\end{theorem}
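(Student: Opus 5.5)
The proof is an induction on the size of the ``gap'' between the two sets, reducing every submodular inequality to the local ones. For $A,B\subseteq E$, put $S=A\cap B$, $X=A\setminus B$ and $Y=B\setminus A$. The inequality $f(A)+f(B)\ge f(A\cup B)+f(A\cap B)$ can be rewritten as
\[
f(S\cup X)-f(S) \ge f(S\cup X\cup Y)-f(S\cup Y),
\]
which says that the marginal gain of adding $X$ does not increase when $Y$ is added first. If $X$ or $Y$ is empty, the inequality is an equality, so we may assume both are nonempty. We then induct on $|X|+|Y|$. The base case $|X|=|Y|=1$ is exactly the local submodularity hypothesis.

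The induction first handles $|X|=1$, say $X=\{x\}$, by induction on $|Y|$. Write $Y=Y'\cup\{y\}$ with $Y'$ nonempty. Applying the induction hypothesis to the base $S$ with $Y'$ gives
\[
f(S\cup x)-f(S)\ge f(S\cup Y'\cup x)-f(S\cup Y').
\]
Applying the local inequality at the base $S\cup Y'$ with the elements $x,y$ gives
\[
f(S\cup Y'\cup x)-f(S\cup Y')\ge f(S\cup Y\cup x)-f(S\cup Y).
\]
Chaining the two yields the claim. This is a telescoping argument.

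For general $X$, we split $X=X'\cup\{x\}$ and telescope in the other coordinate. The marginal gain of $X$ over $S$ equals the gain of $X'$ over $S$ plus the gain of $x$ over $S\cup X'$. The same decomposition holds over $S\cup Y$, namely the gain of $X'$ over $S\cup Y$ plus the gain of $x$ over $S\cup Y\cup X'$. We compare the first terms by the induction hypothesis for $(X',Y)$ at base $S$. We compare the second terms by the $|X|=1$ case at base $S\cup X'$ with set $Y$, which is disjoint from $S\cup X'\cup\{x\}$. Adding the two inequalities gives the result.

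The main care point is bookkeeping: each application of the hypothesis must be at a valid base set, with the elements involved lying outside it. This holds because $S$, $X$ and $Y$ are pairwise disjoint. Integrality and nonnegativity of $f$ play no role; the argument works for any real-valued set function.
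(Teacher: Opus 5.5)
Your proof is correct and is essentially the standard argument behind the cited result (Schrijver, Thm.~44.1), which the paper quotes without proof. That argument is an induction on $|X|+|Y|=|A\setminus B|+|B\setminus A|$ that peels off one element and applies the hypothesis at the two bases $S$ and $S\cup X'$; your separate $|X|=1$ step just spells out the symmetric case. Your remark that integrality and nonnegativity play no role is worth keeping, since the paper later applies this theorem to arbitrary $\mathbb{Z}$-valued functions such as $f-\mathbf{1}_{\mathcal S}$.
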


We also recall several notions for polymatroids that generalize the corresponding notions for matroids. They will be used to prove that almost all $k$-polymatroids are connected.

\begin{definition} 
    Let $f$ be a polymatroid on $E$.
    The \emph{rank} of $f$ is $f(E)$.
    The \emph{closure} of $T \subseteq E$ in $f$ is 
    \[
        \cl_f(T) := \{i \in E \mid f(T\cup\{i\}) = f(T)\}.
    \]
    We say a subset $F \subseteq E$ is a \emph{flat} if $F = \cl_f(T)$ for some $T$.
\end{definition}

\begin{definition}
    The \emph{$k$-dual} of a $k$-polymatroid $f$ on $E$ is a function $f^* : 2^{E} \to \mathbb{Z}_{\ge 0}$ defined by 
    \[
        f^*(A) 
        := k|A| - f(E) + f(E \setminus A).
    \]
    The $k$-dual $f^*$ is a $k$-polymatroid; see \cite[\S44.6f]{Schrijver2003a}.
\end{definition}

Note that the ordinary dual of a matroid is identical to the $1$-dual of the corresponding $1$-polymatroid.

\begin{definition}\label{def:principal extension}
    Let $f$ be a polymatroid on $[n]$ and $F$ be its flat.
    The \emph{principal extension} of $f$ on $F$ is the function $p_{f,F} : 2^{[n+1]} \to \mathbb{Z}_{\ge0}$ defined by
    \[
        p_{f,F}(A):=f(A),\qquad
        p_{f,F}(A\cup \{n+1\}):=\min\{f(A)+1,f(A\cup F)\}
        \quad(A\subseteq[n]).
    \]
\end{definition}

\begin{proposition}[\cite{Lovasz1983}]\label{prop:principal extension}
    Every principal extension of a polymatroid is a polymatroid.
\end{proposition}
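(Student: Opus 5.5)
We verify the three axioms directly for $g := p_{f,F}$, using that $F$ is a flat. The key fact is that for every $A\subseteq[n]$ we have $f(A\cup F)\ge f(A)$. Hence $g(A\cup\{n+1\}) \in \{f(A), f(A)+1\}$, with the value $f(A)$ occurring exactly when $f(A\cup F)=f(A)$, i.e.\ when $F\subseteq \cl_f(A)$. (For a polymatroid, $f(A\cup F)=f(A)$ is equivalent to $F\subseteq\cl_f(A)$ by submodularity.)

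\textbf{Normalization and monotonicity.} We have $g(\emptyset)=f(\emptyset)=0$. Both maps $A\mapsto f(A)+1$ and $A\mapsto f(A\cup F)$ are monotone, so their minimum is monotone in $A$. Moreover $g(A\cup\{n+1\})\ge f(A)=g(A)$. Together these give monotonicity of $g$ on all of $2^{[n+1]}$.

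\textbf{Submodularity.} By Theorem~\ref{thm:local-submodularity}, it suffices to check local submodularity $g(S+x)+g(S+y)\ge g(S+x+y)+g(S)$. There are two cases.

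If $n+1\notin S\cup\{x,y\}$, this is just local submodularity of $f$.

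Otherwise $n+1$ is in $S$ or is one of $x,y$. In both cases we write $h(A):=\min\{f(A)+1, f(A\cup F)\}$ for $A \subseteq [n]$.
\begin{itemize}
\item[(a)] If $n+1\in S$, write $S=T\cup\{n+1\}$; we need $h$ to be locally submodular. The function $A\mapsto f(A\cup F)$ is submodular (a contraction-type function), and $f+1$ is submodular. The minimum of two submodular functions is not submodular in general. Here, however, $h=f+\min\{1,\delta\}$ with $\delta(A):=f(A\cup F)-f(A)\ge 0$. I would check the local inequality by cases on which of $T, T+x, T+y, T+x+y$ have $\delta=0$. The set $\{A:\delta(A)=0\}=\{A:F\subseteq\cl_f(A)\}$ is an up-set. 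The only dangerous case is $\delta(T+x)=\delta(T+y)=0$ but $\delta(T)>0$. Then
\[
h(T+x)+h(T+y)=f(T+x)+f(T+y)\ge f(T+x+y)+f(T)
\]
by submodularity of $f$, and the right-hand side of the target inequality is $f(T+x+y)+\min\{f(T)+1, f(T\cup F)\}$. Here I need $f(T+x)+f(T+y)\ge f(T+x+y)+f(T)+1$. This should follow from submodularity applied to $T+x$ and $T\cup F$: since $x\in\cl_f(T\cup F)$ is not guaranteed, I instead use $f(T+x)=f(T+x\cup F)\ge f(T\cup F)>f(T)$. Then
\[
f(T+x)+f(T+y)=f(T+x+F)+f(T+y+F)\ge f(T+x+y+F)+f(T+F)\ge f(T+x+y)+f(T\cup F),
\]
which bounds the right-hand side by its second term in the min. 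The remaining cases are similar.
\item[(b)] If $y=n+1$, we need $h(S+x)-f(S+x)\le h(S)-f(S)$ after rearranging, i.e.\ $\min\{1,\delta(S+x)\}\le\min\{1,\delta(S)\}$. This holds since $\delta(S+x)=0$ whenever $\delta(S)=0$, by the up-set property.
\end{itemize}

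\textbf{Main obstacle.} The main obstacle is case (a), the local submodularity of the truncated function $h$. This is where flatness and the up-set structure are used, and I expect the displayed chain above to handle the only nontrivial subcase.
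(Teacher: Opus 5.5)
The paper gives no proof of this proposition; it only cites Lov\'asz. Your direct verification is correct and self-contained. The whole argument rests on one observation. The function $\epsilon:=\min\{1,\delta\}$, which equals $p_{f,F}(A\cup\{n+1\})-f(A)$, is a non-increasing $\{0,1\}$-valued function on $2^{[n]}$; this is your up-set observation. It gives monotonicity and case (b) at once. In case (a) it shows that $\Diamond\epsilon(T;x,y)\ge 0$ except in the single configuration $\epsilon(T)=1$, $\epsilon(T+x)=\epsilon(T+y)=0$. There your chain $f(T+x)+f(T+y)\ge f(T+x+y)+f(T\cup F)$ correctly supplies the missing unit.

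Two small corrections. First, flatness of $F$ is never used: every step works verbatim for an arbitrary $F\subseteq[n]$, consistent with $f(A\cup F)=f(A\cup\cl_f(F))$. So your opening phrase ``using that $F$ is a flat'' and your closing remark that case (a) is ``where flatness is used'' are inaccurate, though harmless. Second, ``the remaining cases are similar'' undersells what is actually true: in every other configuration $\Diamond\epsilon(T;x,y)\ge 0$ because $\epsilon$ is non-increasing and $\{0,1\}$-valued. Hence $\Diamond h=\Diamond f+\Diamond\epsilon\ge 0$ immediately, and you should say so explicitly.

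A more compact route to case (a) is the standard lemma: if $g_1,g_2$ are submodular and $g_2-g_1$ is non-increasing, then $\min\{g_1,g_2\}$ is submodular. To check it, suppose the minimum is attained by $g_1$ at $A$ and by $g_2$ at $B$. Then it is attained by $g_1$ at $A\cap B$ and by $g_2$ at $A\cup B$. The inequality follows from submodularity of $g_1$ together with $(g_2-g_1)(B)\ge(g_2-g_1)(A\cup B)$.

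Apply this with $g_1=f+1$ and $g_2=f(\cdot\cup F)$, whose difference $\delta-1$ is non-increasing by submodularity of $f$. This gives case (a) with no case analysis on the four sets. It also extends unchanged to $\min\{f(A)+t,f(A\cup F)\}$ for any integer $t\ge 0$. Your local argument, by contrast, relies on the increment being $1$, so that $\epsilon$ is $\{0,1\}$-valued. What your route buys is that it stays entirely within the local criterion of Theorem~\ref{thm:local-submodularity}, which the paper already uses elsewhere.
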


\section{Upper bound}\label{sec:upperbound}

In this section, we prove the upper bound in
Theorem~\ref{thm:asymptotic_counting} by exposing one element at a time. A high-level sketch is the following: A $k$-polymatroid on $[n]$ is determined by its restriction to $[n-1]$ together with its discrete derivative in the direction $n$. Submodularity forces this derivative to be a decreasing $\{0,1,\ldots,k\}$-valued function, which can be encoded by $k$ monotone Boolean functions.  The asymptotic solution to the Dedekind problem then gives a recursive upper bound for $p_k(n)$, and iterating this bound yields the desired estimate.

For a $k$-polymatroid $f:2^{[n]} \to \mathbb{Z}_{\geq 0}$ and $i\in[n]$, define the \emph{discrete derivative} $\nabla_i f:2^{[n] \setminus \{i\}} \to \mathbb{Z}_{\geq 0}$ of $f$ in direction $i$ by
\[
    \nabla_i f(A) = f(A \cup \{i\}) - f(A).
\]
We begin with two basic properties of this derivative.
\begin{lemma}\label{lem:discrete derivative}
    For every $k$-polymatroid $f$ and $i \in [n]$, the function $\nabla_i f$ satisfies the following properties:
    \begin{itemize}
        \item $0 \leq \nabla_i f \leq k$;
        \item $\nabla_i f$ is monotonically decreasing; that is, if $A \subseteq B \subseteq [n] \setminus \{i\}$, then $\nabla_i f(A) \geq \nabla_i f(B)$.
    \end{itemize}
\end{lemma}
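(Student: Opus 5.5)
Both properties follow directly from the three polymatroid axioms together with the $k$-bound. The plan is to verify each bullet separately, treating $A \subseteq [n]\setminus\{i\}$ throughout so that $A$ and $\{i\}$ are disjoint.

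For the first bullet, nonnegativity $\nabla_i f(A) \ge 0$ is exactly monotonicity applied to $A \subseteq A \cup \{i\}$. For the upper bound, I apply submodularity to the sets $A$ and $\{i\}$. Since $A \cap \{i\} = \emptyset$ and $f(\emptyset)=0$, this gives
\[
f(A) + f(\{i\}) \ge f(A \cup \{i\}).
\]
Combined with the $k$-polymatroid condition $f(\{i\}) \le k|\{i\}| = k$, it yields $\nabla_i f(A) \le f(\{i\}) \le k$.

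For the second bullet, let $A \subseteq B \subseteq [n]\setminus\{i\}$. I apply submodularity to the pair $A\cup\{i\}$ and $B$. Their union is $B \cup \{i\}$. Their intersection is $A$, because $i \notin B$ and $A \subseteq B$. So
\[
f(A\cup\{i\}) + f(B) \ge f(B\cup\{i\}) + f(A).
\]
Rearranging gives $f(A\cup\{i\}) - f(A) \ge f(B\cup\{i\}) - f(B)$, that is, $\nabla_i f(A) \ge \nabla_i f(B)$.

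There is no genuine obstacle. The only point needing care is computing the intersection $(A\cup\{i\})\cap B = A$ correctly, which uses $i\notin B$. Theorem~\ref{thm:local-submodularity} is not needed, since full submodularity is part of the definition of a polymatroid.
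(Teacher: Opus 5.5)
Your proof is correct and essentially the same as the paper's: the first bullet is identical (monotonicity, then submodularity on $A$ and $\{i\}$ together with $f(\{i\})\le k$). For the second bullet, the paper applies submodularity to $A\cup\{i\}$ and $A\cup\{j\}$ and implicitly chains single-element steps, whereas you apply it once to $A\cup\{i\}$ and $B$, which is slightly more direct and avoids the chaining.
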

\begin{proof}
    Let $A \subseteq [n]\setminus\{i\}$. Since $f$ is monotone, $f(A\cup\{i\}) \ge f(A)$, and hence $\nabla_i f(A)=f(A\cup\{i\})-f(A) \ge 0$.
    On the other hand, submodularity applied to $A$ and $\{i\}$ gives
    \[
        f(A)+f(\{i\}) \ge f(A\cup\{i\})+f(\emptyset)=f(A\cup\{i\}),
    \]
    so
    \[
        \nabla_i f(A)=f(A\cup\{i\})-f(A) \le f(\{i\}) \le k.
    \]
    This proves $0\le \nabla_i f \le k$.

    For the second claim, it is enough to show that for every $A \subseteq [n]\setminus\{i\}$ and every $j \in [n]\setminus (A\cup\{i\})$,
    \[
        \nabla_i f(A) \ge \nabla_i f(A\cup\{j\}).
    \]
    By submodularity,
    \[
        f(A\cup\{i\}) + f(A\cup\{j\}) \ge f(A\cup\{i,j\}) + f(A),
    \]
    and rearranging gives
    \[
        \nabla_i f(A)=f(A\cup\{i\})-f(A) \ge f(A\cup\{i,j\})-f(A\cup\{j\})=\nabla_i f(A\cup\{j\}).
    \]
    Therefore, $\nabla_i f$ is monotonically decreasing.
\end{proof}

To bound the number of possible discrete derivatives, we use the following asymptotic result on the Dedekind problem.
\begin{theorem}[\cite{Kleitman1969}]\label{thm:Dedekind}
    The number of monotone Boolean functions $f: 2^{[n]} \to \{0,1\}$ is $2^{(1+o(1))\binom{n}{\lfloor n/2 \rfloor}}$.
\end{theorem}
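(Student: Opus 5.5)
The lower bound is immediate. Every subset $\mathcal{S}$ of the middle level $\binom{[n]}{\lfloor n/2\rfloor}$ is an antichain. Its up-closure $\{A : A \supseteq S \text{ for some } S\in\mathcal{S}\}$ is the support of a monotone Boolean function, and distinct $\mathcal{S}$ give distinct functions. Hence there are at least $2^{N_n}$ such functions.

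For the upper bound, I would use the bijection between monotone Boolean functions and antichains (the minimal elements of the support) and count antichains in $2^{[n]}$. The plan is to avoid Kleitman's original intricate argument and instead use the graph container method on the comparability graph $G_n$ of $2^{[n]}$, where $A\sim B$ iff $A\subsetneq B$ or $B\subsetneq A$. Antichains are exactly the independent sets of $G_n$. By the standard graph container lemma (Kleitman--Winston style), it suffices to establish a \emph{supersaturation} statement. Concretely, I aim to show that there are functions $\varepsilon=\varepsilon(n)\to 0$ and $\delta>0$ with the following property: every family $\mathcal{F}\subseteq 2^{[n]}$ with $|\mathcal{F}|\ge (1+\varepsilon)N_n$ spans at least $\delta\, n\,(|\mathcal{F}|-N_n)$ comparable pairs.

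Given supersaturation, the iterative container algorithm works as follows. Repeatedly pick a maximum-degree vertex in the current family, record whether it belongs to the independent set, and delete it together with its neighbours. This produces a collection $\mathcal{C}$ of containers such that every antichain lies in some $C\in\mathcal{C}$, each $|C|\le(1+\varepsilon)N_n$, and $\log|\mathcal{C}| = O\bigl(\tfrac{2^n \log n}{n}\bigr)\cdot\tfrac{1}{\delta}$. Because $N_n=\Theta(2^n/\sqrt n)$, the container count is not small enough in one round. So I would run the algorithm in stages, applying supersaturation with degrees of order $n$ when $|\mathcal{F}|\gg N_n$ and finer bounds near $N_n$. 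The target is $\log|\mathcal{C}| = o(N_n)$. The number of antichains is then at most $|\mathcal{C}|\cdot 2^{(1+\varepsilon)N_n}=2^{(1+o(1))N_n}$.

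The supersaturation step is what I expect to be the main obstacle, and getting enough comparable pairs for families only slightly larger than $N_n$ is the crux. I would first try Kleitman's result that the families of a given size minimising the number of comparable pairs are (asymptotically) unions of levels closest to the middle. Adding $x$ sets beyond the middle level then forces about $x\cdot\lceil n/2\rceil$ comparable pairs, because each set in an adjacent level has about $n/2$ neighbours in the middle level. If this exact extremal result is too costly to reprove, I would fall back on a weaker bound via a symmetric chain decomposition or a LYM-type averaging over random maximal chains. I would then check that the resulting $\delta$, possibly polynomially small in $n$, still keeps the total container count at $2^{o(N_n)}$.
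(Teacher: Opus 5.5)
The paper does not prove this statement; it imports it as a black box from Kleitman's 1969 paper, whose original argument is long and intricate. Your lower bound (up-closures of the $2^{N_n}$ subsets of a middle level) is correct.

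Your upper-bound route is genuinely different from the cited proof, and it works. You run graph containers on the comparability graph, driven by Kleitman's 1968 comparable-pairs theorem: a family of $N_n+x$ sets, with $x$ at most the size of a level adjacent to the middle, spans at least $x\lceil (n+1)/2\rceil$ comparable pairs. This is essentially how Balogh, Treglown and Wagner reproved Kleitman's theorem. It gives a short argument, modulo the 1968 supersaturation theorem and a standard container lemma. The price is a cruder error term, which does not matter here because the paper only uses the $1+o(1)$ form.

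Two parts of your write-up need correcting.

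First, your claim that one round is insufficient is an arithmetic slip. Your own estimate $\log|\mathcal C|=O(2^n\log n/(\delta\varepsilon n))$ is already $o(N_n)$, since its ratio to $N_n=\Theta(2^n/\sqrt n)$ is $O(\log n/(\delta\varepsilon\sqrt n))$. Concretely:
\begin{itemize}
\item Fix $\varepsilon\in(0,1/2)$. If $|A|>(1+\varepsilon)N_n$, apply Kleitman's theorem to a subfamily of $A$ of size $\lceil(1+\varepsilon)N_n\rceil$. This keeps $x$ in the proved range, so you never need supersaturation for large families, and it gives $\Delta(G[A])\ge\varepsilon n/2$.
\item In the algorithm, delete the chosen maximum-degree vertex $v$ together with its neighbours only when $v$ lies in the antichain $I$; otherwise delete $v$ alone.
\item The fingerprint is the set $T$ of chosen vertices lying in $I$. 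It satisfies $|T|\le 2^{n+1}/(\varepsilon n)$, determines the final set $A$, and gives $I\subseteq T\cup A$.
\end{itemize}
Your wording (record a membership bit at every step, delete neighbours regardless) would break either the count or the containment. With the fix, the number of antichains is at most $\binom{2^n}{\le 2^{n+1}/(\varepsilon n)}\,2^{(1+\varepsilon)N_n}=2^{(1+\varepsilon+o(1))N_n}$, and no staging is needed.

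Second, your fallback would fail. A symmetric chain decomposition, or naive LYM averaging, forces only about $|\mathcal F|-N_n$ comparable pairs, i.e.\ $\delta n=O(1)$. The guaranteed degree is then $O(1)$, $|T|$ can be $\Theta(2^n)$, and you get $2^{\Theta(2^n)}$ containers. The computation above shows the guaranteed maximum degree $D$ must satisfy $D\gg\sqrt n\log n$, equivalently $\delta\gg\log n/(\varepsilon\sqrt n)$. So a supersaturation result of Kleitman's strength is indispensable, not optional.
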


The following recursive bound is the main step in the proof of the upper bound.
\begin{lemma}
    For every fixed $k$ and sufficiently large $n$, we have
    \[ p_k(n) \leq p_k(n-1) \cdot 2^{(1+o(1))k N_{n-1}}.\]
\end{lemma}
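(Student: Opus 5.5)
We construct an injection from $k$-polymatroids on $[n]$ into pairs $(g, \nabla)$. Here $g$ is a $k$-polymatroid on $[n-1]$, and $\nabla$ is a decreasing function $2^{[n-1]} \to \{0,1,\dots,k\}$.

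Given a $k$-polymatroid $f$ on $[n]$, let $g$ be its restriction to $2^{[n-1]}$. Normalization, monotonicity, submodularity and the bound $g(A)\le k|A|$ all restrict directly, so $g$ is a $k$-polymatroid on $[n-1]$. Take $\nabla=\nabla_n f$. Since $f(A)=g(A)$ and $f(A\cup\{n\})=g(A)+\nabla_n f(A)$ for $A\subseteq[n-1]$, the pair $(g,\nabla_n f)$ determines $f$, so the map $f\mapsto (g,\nabla_n f)$ is injective. By Lemma~\ref{lem:discrete derivative}, $\nabla_n f$ takes values in $\{0,\dots,k\}$ and is monotonically decreasing. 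Hence
\[ p_k(n)\le p_k(n-1)\cdot D_k(n-1), \]
where $D_k(m)$ denotes the number of decreasing functions $2^{[m]}\to\{0,1,\dots,k\}$.

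It remains to show $D_k(m)\le 2^{(1+o(1))kN_m}$. For a decreasing $h:2^{[m]}\to\{0,\dots,k\}$ and $t\in[k]$, define the threshold indicator
\[ h_t(A)=\mathbf{1}[h(A)\ge t]. \]
Each $h_t$ is a decreasing Boolean function, and $h=\sum_{t=1}^k h_t$, so $h\mapsto(h_1,\dots,h_k)$ is injective. Complementing the argument ($A\mapsto[m]\setminus A$) turns decreasing Boolean functions bijectively into monotone ones. Theorem~\ref{thm:Dedekind} therefore bounds the number of choices for each $h_t$ by $2^{(1+o(1))N_m}$. This gives
\[ D_k(m)\le 2^{(1+o(1))kN_m}, \]
with $k$ fixed so the $o(1)$ terms combine. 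Setting $m=n-1$ yields the lemma.

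There is no serious obstacle. The only care needed is to check that the restriction remains a $k$-polymatroid, and that the reduction from decreasing to monotone Boolean functions lets us apply Theorem~\ref{thm:Dedekind} as stated. (One could further impose $h_1\ge h_2\ge\cdots\ge h_k$, but the crude product bound already suffices.)
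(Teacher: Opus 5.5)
Your proposal is correct and follows essentially the same route as the paper: restrict to $[n-1]$, encode $f$ by the pair (restriction, $\nabla_n f$), and bound the number of possible derivatives by writing each as $k$ threshold Boolean functions and applying Kleitman's bound. The only cosmetic difference is that you use the decreasing indicators $\mathbf{1}[h\ge t]$ and then complement the argument, whereas the paper uses $\mathbf{1}[g\le i]$, which are already monotone.
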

\begin{proof}
    For a $k$-polymatroid $f$ on $[n]$, let $f_0 := f|_{2^{[n-1]}}$ be the restriction of $f$ to $2^{[n-1]}$, and let $g := \nabla_n f$ be its discrete derivative in direction $n$.
    Then $f_0$ is a $k$-polymatroid on $[n-1]$.
    Moreover, $f$ is determined by the pair $(f_0,g)$: if $A \subseteq [n-1]$, then $f(A)=f_0(A)$, and if $A=B\cup\{n\}$ with $B \subseteq [n-1]$, then
    \[
        f(A)=f(B\cup\{n\})=f(B)+\nabla_n f(B)=f_0(B)+g(B).
    \]
    Thus, it suffices to bound the number of possible functions $g$.

    By Lemma~\ref{lem:discrete derivative}, every possible $g$ satisfies $0 \le g \le k$ and is monotonically decreasing.
    For each $i \in \{0,1,\dots,k-1\}$, define
    \[
        \varphi_i^g(A) := \mathbf{1}_{\{g(A)\le i\}} \qquad (A \subseteq [n-1]).
    \]
    Since $g$ is monotonically decreasing, each $\varphi_i^g$ is a monotone Boolean function on $2^{[n-1]}$.
    Moreover, $g$ is determined by the $k$-tuple $(\varphi_0^g,\varphi_1^g,\dots,\varphi_{k-1}^g)$.
    Hence, by Theorem~\ref{thm:Dedekind}, the number of possible choices for $g$ is at most $2^{k(1+o(1))\binom{n-1}{\lfloor (n-1)/2 \rfloor}}$.
    This proves the lemma.
\end{proof}

\begin{theorem}\label{thm:asymptotic_counting-upper}
    For every fixed $k$, we have $\log p_k(n) \le k \cdot N_n \cdot (1+o(1))$.
\end{theorem}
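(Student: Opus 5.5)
We iterate the recursive bound from the preceding lemma and then sum the resulting series. Fix $k$ and $\varepsilon>0$. By the lemma there is $n_0$ such that for all $n>n_0$,
\[
    \log p_k(n) \le \log p_k(n-1) + (1+\varepsilon)k N_{n-1}.
\]
Telescoping from $n_0$ to $n$ gives
\[
    \log p_k(n) \le \log p_k(n_0) + (1+\varepsilon)k\sum_{m=n_0}^{n-1} N_m .
\]
The term $\log p_k(n_0)$ is a constant depending only on $k$ and $\varepsilon$. Indeed, $p_k(n_0)\le (kn_0+1)^{2^{n_0}}$, since each value $f(A)$ lies in $\{0,\dots,k|A|\}$. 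This constant is therefore $o(N_n)$.

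The main point is that the sum $\sum_{m<n} N_m$ is $O(N_n/\sqrt n)$, hence $o(N_n)$. Summing the recursion this way only uses the last step at full strength. This would give an upper bound that is much stronger than $kN_n(1+o(1))$, so something in the argument needs checking.

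The lemma's bound is $kN_{n-1}$, and $N_{n-1}\approx N_n/2$. So the telescoped sum is dominated by its last few terms:
\[
    N_{n-1}+N_{n-2}+\cdots \approx N_n\Bigl(\tfrac12+\tfrac14+\cdots\Bigr)\approx N_n .
\]
More precisely, the ratio $N_m/N_{m+1}$ tends to $1/2$ (alternately equal to $(m+2)/(2(m+1))$ or $1/2$ exactly). For the tail, choose $n_0$ large so that $N_m\le (1/2+\varepsilon)N_{m+1}$ for $m\ge n_0$. Then
\[
    \sum_{m=n_0}^{n-1}N_m \le N_n\sum_{j\ge1}\Bigl(\tfrac12+\varepsilon\Bigr)^j = N_n\cdot\frac{1/2+\varepsilon}{1/2-\varepsilon}.
\]
This is $(1+O(\varepsilon))N_n$.

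Combining the two estimates,
\[
    \log p_k(n)\le (1+\varepsilon)(1+O(\varepsilon))kN_n + C_{k,\varepsilon}.
\]
Since $N_n\to\infty$, the constant $C_{k,\varepsilon}$ is eventually at most $\varepsilon N_n$. Letting $\varepsilon\to0$ gives $\log p_k(n)\le kN_n(1+o(1))$.

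The main obstacle is the uniformity of the $o(1)$ in the lemma. It must be a single error term $\delta(n)\to0$ that depends on $n$ only, which it is, since it comes from Theorem~\ref{thm:Dedekind} applied on $[n-1]$. The sum estimate then handles it: the geometric decay makes the weighted error $\sum_m \delta(m)N_m$ equal to $o(N_n)$. To see this, split the sum at $n-L$ for a large fixed $L$. The terms with $m<n-L$ contribute at most $2^{-L}$ times a constant times $N_n$. The last $L$ terms have $\delta(m)$ small.
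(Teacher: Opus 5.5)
Your argument is correct and is essentially the paper's: you telescope the recursive lemma from a threshold $n_0(\varepsilon)$ and show that $\sum_{m=n_0}^{n-1}N_m=(1+O(\varepsilon))N_n$ because $N_m/N_{m+1}\to 1/2$. Your uniform bound $N_m\le(1/2+\varepsilon)N_{m+1}$ for $m\ge n_0$ is a slightly leaner substitute for the paper's split at a fixed $L$ with the cruder ratio $2/3$ on the head. Do, however, delete the stray claim that $\sum_{m<n}N_m=O(N_n/\sqrt n)$: it is false (the sum is $(1+o(1))N_n$, as your next paragraph shows). The closing paragraph on uniformity is also unnecessary, since the $\varepsilon$--$n_0$ formulation already handles it.
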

\begin{proof}
    Fix $\varepsilon>0$.
    By the previous lemma, there exists $n_0$ such that for every $n > n_0$,
    \[
        \log p_k(n) \le \log p_k(n-1) + k(1+\varepsilon) N_{n-1}.
    \]
    Hence, for every $n > n_0$,
    \[
        \log p_k(n) \le \log p_k(n_0) + k(1+\varepsilon)\sum_{m=n_0}^{n-1} N_m.
    \]

    We claim that
    \[
        \sum_{m=1}^{n-1} N_m = N_n(1+o(1)).
    \]
    Indeed, for $m \ge 2$,
    \[
        \frac{N_{m-1}}{N_m}=
        \begin{cases}
            \frac12 & \text{if $m$ is even},\\
            \frac{\lfloor m/2 \rfloor + 1}{m} \le \frac23 & \text{if $m$ is odd}.
        \end{cases}
    \]
    Therefore $N_{n-j} \le (2/3)^j N_n$ for all $j \ge 1$, so for every fixed $L$,
    \[
        \sum_{m=1}^{n-L-1} N_m = O\bigl((2/3)^L N_n\bigr).
    \]
    On the other hand, for each fixed $j \ge 1$, each factor in
    \[
        \frac{N_{n-j}}{N_n} = \prod_{\ell=0}^{j-1} \frac{N_{n-\ell-1}}{N_{n-\ell}}
    \]
    tends to $1/2$ as $n \to \infty$, and hence
    \[
        \frac{N_{n-j}}{N_n} = 2^{-j} + o(1).
    \]
    Thus, for every fixed $L$,
    \[
        \sum_{m=1}^{n-1} N_m = \sum_{j=1}^{L} N_{n-j} + O\bigl((2/3)^L N_n\bigr)
        = \left(\sum_{j=1}^{L} 2^{-j} + o(1) + O\bigl((2/3)^L\bigr)\right)N_n.
    \]
    Letting first $n \to \infty$ and then $L \to \infty$, we obtain
    \[
        \sum_{m=1}^{n-1} N_m = N_n(1+o(1)).
    \]

    Consequently,
    \[
        \log p_k(n) \le \log p_k(n_0) + k(1+\varepsilon)N_n(1+o(1))
        = k(1+\varepsilon)N_n(1+o(1)).
    \]
    Since $\varepsilon>0$ was arbitrary, this yields
    $\log p_k(n) \le k \cdot N_n \cdot (1+o(1))$.
\end{proof}

\section{Lower bound}\label{sec:lowerbound}

In this section, we prove the lower bound in
Theorem~\ref{thm:asymptotic_counting} by constructing a large family of $k$-polymatroids. We begin with a cardinality-based polymatroid whose local submodular inequalities hold with a margin of two on $\lfloor k/2\rfloor$ levels near the middle of the Boolean lattice. On these levels, the value at each set may be decreased independently by one without violating submodularity.  As the central levels all have size $(1+o(1))N_n$, these independent perturbations produce the required number of polymatroids.  We conclude the section by using the resulting counting bounds to show that almost all $k$-polymatroids are proper when $k\ge2$.

For a set function $f: 2^{[n]} \to \mathbb{Z}$, $S \subseteq [n]$ and $x\neq y \in [n] \setminus S$, define the \emph{local defect} $\Diamond f(S; x, y)$ by
\[
    \Diamond f (S; x, y) = f(S\cup\{x\}) + f(S\cup\{y\}) - f(S\cup\{x,y\}) - f(S).
\]
And define the \emph{defect}  $\Diamond f : \{0,1,2,\dots,n-2\} \to \mathbb{Z}$ of $f$ by
\[
    \Diamond f (m) = \min_{\substack{S\in \binom{[n]}{m} \\ x \neq y\in[n]\setminus S}} \Diamond f(S; x, y).
\]
In terms of the defect, Theorem~\ref{thm:local-submodularity} says that a set function $f : 2^E \to \mathbb{Z}$ is submodular if and only if $\Diamond f (m) \ge 0$ for all $m \in \{0,1,2,\dots,n-2\}$. We will use the following two elementary lemmas.

\begin{lemma}\label{lem:easy-construction}
    Let $g: \{0,1,\ldots,n\} \to \mathbb{Z}$ satisfy $g(0)=0$, and define
    $f: 2^{[n]} \to \mathbb{Z}$ by $f(A) := g(|A|)$.  If
    $\nabla g(m) := g(m+1) - g(m)$ is non-increasing and belongs to
    $\{0,1,\ldots,k\}$ for every $0\le m<n$, then $f$ is a
    $k$-polymatroid.
\end{lemma}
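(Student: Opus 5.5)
We check the three polymatroid axioms together with the bound $f(A)\le k|A|$ directly from the hypotheses on $g$, using Theorem~\ref{thm:local-submodularity} to reduce submodularity to a local condition.

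\textbf{Normalization, values, and the $k$-bound.} We have $f(\emptyset)=g(0)=0$. Telescoping gives
\[
    g(m)=\sum_{j=0}^{m-1}\nabla g(j).
\]
Since each $\nabla g(j)$ lies in $\{0,1,\dots,k\}$, this yields $0\le g(m)\le km$. Hence $f$ takes values in $\mathbb{Z}_{\ge0}$ and satisfies $f(A)\le k|A|$ for every $A$.

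\textbf{Monotonicity.} If $A\subseteq B$, then $|A|\le|B|$. Because $\nabla g\ge 0$, the function $g$ is non-decreasing, so $f(A)=g(|A|)\le g(|B|)=f(B)$.

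\textbf{Submodularity.} By Theorem~\ref{thm:local-submodularity}, it suffices to check local submodularity. Take $S$ with $|S|=m$ and distinct $x,y\notin S$, so that $0\le m\le n-2$. Then
\[
    \Diamond f(S;x,y)=2g(m+1)-g(m+2)-g(m)=\nabla g(m)-\nabla g(m+1).
\]
Since $\nabla g$ is non-increasing, this quantity is nonnegative.

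No step is a genuine obstacle. The only point needing care is that Theorem~\ref{thm:local-submodularity} is stated for functions into $\mathbb{Z}_{\ge0}$, which is why nonnegativity is established first.
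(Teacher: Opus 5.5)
Your proof is correct and matches the paper's argument: normalization from $g(0)=0$, monotonicity from $\nabla g\ge 0$, and submodularity via Theorem~\ref{thm:local-submodularity} using $\Diamond f(S;x,y)=\nabla g(m)-\nabla g(m+1)\ge 0$. The one small difference is the bound $f(A)\le k|A|$, together with nonnegativity. You get it directly by telescoping $g(m)=\sum_{j<m}\nabla g(j)$, whereas the paper only checks $f(\{i\})=g(1)\le k$ and leaves the extension to all $A$ implicit via subadditivity; your version is slightly more explicit.
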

\begin{proof}
    We verify the three polymatroid axioms.
    First, $f(\emptyset) = g(0) = 0$.
    Next, $g$ is non-decreasing because $\nabla g \ge 0$, so $f$ is non-decreasing by definition.
    Finally, for $0\le m \le n-2$,
    $\Diamond f(m) = 2 g(m+1) - g(m+2) - g(m) = \nabla g(m) - \nabla g(m+1) \ge 0$ because $\nabla g$ is non-increasing. By Theorem~\ref{thm:local-submodularity}, $f$ is submodular. Therefore, $f$ is a polymatroid.

    Since $f(\{i\}) = g(1) = \Delta g(0) \le k$ for every $i\in[n]$, $f$ is a $k$-polymatroid.
\end{proof}

\begin{lemma}\label{lem:simultaneous-layer-perturbation}
    Let $f:2^{[n]}\to\mathbb Z$ be submodular, and let
    $L\subseteq\{1,2,\ldots,n-1\}$ be such that
    $\Diamond f(\ell-1)\ge 2$ for every $\ell\in L$.
    If
    $\mathcal S\subseteq
        \bigcup_{\ell\in L}\binom{[n]}{\ell }
    $,
    then $f-\mathbf 1_{\mathcal S}$ is submodular. 
\end{lemma}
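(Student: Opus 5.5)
We verify local submodularity of $h := f-\mathbf 1_{\mathcal S}$ via Theorem~\ref{thm:local-submodularity} (equivalently, show $\Diamond h(m)\ge 0$ for every $m$). Fix $S\subseteq[n]$ with $|S|=m$ and distinct $x,y\notin S$. Linearity of the local defect in the function gives
\[
    \Diamond h(S;x,y) = \Diamond f(S;x,y) - \bigl(\mathbf 1_{\mathcal S}(S\cup\{x\}) + \mathbf 1_{\mathcal S}(S\cup\{y\}) - \mathbf 1_{\mathcal S}(S\cup\{x,y\}) - \mathbf 1_{\mathcal S}(S)\bigr).
\]
The perturbation only lowers the defect through the two positive terms $\mathbf 1_{\mathcal S}(S\cup\{x\})$ and $\mathbf 1_{\mathcal S}(S\cup\{y\})$; the other two terms enter with a plus sign after subtraction and can only help. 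So the bracket is at most $\mathbf 1_{\mathcal S}(S\cup\{x\})+\mathbf 1_{\mathcal S}(S\cup\{y\})\le 2$.

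We then split into two cases according to the level $m+1$ of the sets $S\cup\{x\}$ and $S\cup\{y\}$.

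\emph{Case $m+1\in L$.} By hypothesis $\Diamond f(S;x,y)\ge\Diamond f(m)\ge 2$, so $\Diamond h(S;x,y)\ge 2-2=0$.

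\emph{Case $m+1\notin L$.} Since every member of $\mathcal S$ lies on a level in $L$, neither $S\cup\{x\}$ nor $S\cup\{y\}$ belongs to $\mathcal S$. The bracket is then $-\mathbf 1_{\mathcal S}(S\cup\{x,y\})-\mathbf 1_{\mathcal S}(S)\le 0$, and hence $\Diamond h(S;x,y)\ge\Diamond f(S;x,y)\ge 0$ by submodularity of $f$.

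In both cases the local defect is nonnegative, so Theorem~\ref{thm:local-submodularity} gives that $h$ is submodular. That theorem is stated for nonnegative-valued functions, but its proof (a telescoping of local inequalities) does not use the sign of the values, and the paper already invokes it in this generality in the remark after the definition of the defect. There is no real obstacle here; the only point needing care is to record that the sets on levels $m$ and $m+2$ carry the favorable sign, so that the case split need only look at level $m+1$.
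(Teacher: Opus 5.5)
Your proof is correct and follows the paper's argument essentially verbatim. Like the paper, you bound $\Diamond\mathbf 1_{\mathcal S}(S;x,y)$ by $2$ when $m+1\in L$ and by $0$ otherwise, and then apply Theorem~\ref{thm:local-submodularity}. Your added remark is fair: the local-to-global theorem does not actually need the values to be nonnegative, a point the paper leaves implicit when it applies the theorem to $\mathbb{Z}$-valued functions.
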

\begin{proof}
    Note that $\Diamond f(m) \ge 0$ for all $m\in\{0,1,\dots,n-2\}$ by Theorem~\ref{thm:local-submodularity}.
    Fix $m\in\{0,1,\dots,n-2\}$, $S\in\binom{[n]}m$, and distinct
    $x,y\in[n]\setminus S$. Because
    $\mathcal S\subseteq \bigcup_{\ell\in L}\binom{[n]}{\ell}$, we have
    \[
        \Diamond \mathbf{1}_{\mathcal S}(S;x,y) \le
        \begin{cases}
            2 & \text{if $m+1\in L$}, \\
            0 & \text{if $m+1\notin L$}.
        \end{cases}
    \]
    Indeed, in the first case the two positive terms in the local defect
    are each at most one. In the second case, those terms vanish because
    $S\cup\{x\}$ and $S\cup\{y\}$ have cardinality $m+1\notin L$.
    It follows that
    \[
        \Diamond(f-\mathbf{1}_{\mathcal S})(S;x,y)
        =
        \Diamond f(S;x,y)-\Diamond\mathbf{1}_{\mathcal S}(S;x,y)
        \ge
        \begin{cases}
            \Diamond f(m)-2 & \text{if $m+1\in L$} \\
            \Diamond f(m) & \text{if $m+1\notin L$}
        \end{cases}
        \ge 0.
    \]
    Therefore,
    $f-\mathbf{1}_{\mathcal S}$ is submodular by
    Theorem~\ref{thm:local-submodularity}.
\end{proof}
We now prove the lower bound in the main theorem.

\begin{theorem}\label{thm:asymptotic_counting-lower}
    Let $k$ be a fixed positive integer. Then
    \[
        \log p_k(n) \ge \lfloor {k}/{2} \rfloor \cdot N_n \cdot (1+o(1)).
    \]
\end{theorem}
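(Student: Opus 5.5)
We will build one cardinality-based $k$-polymatroid $f(A)=g(|A|)$ via Lemma~\ref{lem:easy-construction}, and then perturb it on $\lfloor k/2\rfloor$ central levels using Lemma~\ref{lem:simultaneous-layer-perturbation}. Set $t:=\lfloor k/2\rfloor$; if $t=0$ there is nothing to prove, so assume $k\ge 2$. Choose a window of $t$ consecutive levels $L=\{\ell_0+1,\dots,\ell_0+t\}$ centered at $n/2$. Define the increments $\nabla g(m)$ to be non-increasing in $\{0,\dots,k\}$ and to drop by exactly $2$ at each step across the window:
\[
\nabla g(m)-\nabla g(m+1)\ge 2 \quad\text{whenever } m+1\in L .
\]
Concretely, take $\nabla g(m)=k$ for $m\le \ell_0$, then let it decrease by $2$ at each of the $t$ steps, reaching $k-2t\ge 0$, and keep it constant afterwards. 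Since $\Diamond f(m)=\nabla g(m)-\nabla g(m+1)$, as computed in the proof of Lemma~\ref{lem:easy-construction}, we get $\Diamond f(\ell-1)\ge 2$ for every $\ell\in L$. Lemma~\ref{lem:easy-construction} then shows $f$ is a $k$-polymatroid.

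Next, for every family $\mathcal S\subseteq\bigcup_{\ell\in L}\binom{[n]}{\ell}$, Lemma~\ref{lem:simultaneous-layer-perturbation} shows that $f_{\mathcal S}:=f-\mathbf 1_{\mathcal S}$ is submodular. We must then check the remaining axioms.

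\begin{itemize}
\item \emph{Normalization.} This holds because $0\notin L$.
\item \emph{The bound $f_{\mathcal S}(A)\le k|A|$.} This holds since $f_{\mathcal S}\le f$.
\item \emph{Nonnegativity.} This holds since $f(A)\ge 1$ whenever $A\ne\emptyset$, because $\nabla g(0)=k\ge1$.
\item \emph{Monotonicity.} For $A\subseteq [n]$ and $x\notin A$, we need $f_{\mathcal S}(A\cup\{x\})\ge f_{\mathcal S}(A)$. This follows if $\nabla g(|A|)\ge 1$ on the levels where the top set may be decreased. Those levels are $|A|+1\in L$, where $\nabla g(|A|)\ge k-2(t-1)\ge 2$.
\end{itemize}

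The monotonicity check is the main point to verify. It is why the window must sit before the increments reach $k-2t$ (possibly $0$), and why I index the decrements so that each perturbed level is entered with increment at least $2$. If the indexing is off by one, I would instead start the decrease one level earlier, or simply argue monotonicity via submodularity plus the condition $f_{\mathcal S}(E\setminus\{x\})\le f_{\mathcal S}(E)$ at the top.

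Distinct $\mathcal S$ give distinct functions, so
\[
p_k(n)\ \ge\ 2^{\sum_{\ell\in L}\binom{n}{\ell}} .
\]
Since $t$ is fixed and every $\ell\in L$ satisfies $|\ell-n/2|=O(1)$, we have $\binom{n}{\ell}=(1+o(1))N_n$. Taking logarithms gives $\log p_k(n)\ge t\,N_n(1+o(1))$, which is the claimed bound.
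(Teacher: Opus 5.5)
Your proposal is correct and follows essentially the same route as the paper. Both build a cardinality-based $f$ whose increments drop by $2$ across $\lfloor k/2\rfloor$ central levels, then apply independent unit decrements on those levels via Lemma~\ref{lem:simultaneous-layer-perturbation}, with monotonicity holding because each perturbed level is entered with increment at least $k-2(t-1)\ge 2$. Your indexing is in fact right, so the off-by-one hedge is unnecessary; keeping $\nabla g$ constant at $k-2t$ after the window, rather than dropping it to $0$ as the paper does, is immaterial.
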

\begin{proof}
    If $k=1$, then the asserted lower bound is zero and there is nothing
    to prove.  We may therefore assume that $k\ge2$.
    Suppose also that $n$ is sufficiently large, and set
    $r:=\lfloor n/2\rfloor \ge 1$ and $s:=\lfloor k/2\rfloor \ge 1$. 
    
    Choose a function
    $g: \{0,1,\ldots,n\}\to\mathbb Z$ such that $g(0)=0$ and
    \[
        \nabla g(m) = g(m+1) - g(m) = \begin{cases}
            k & \text{if $0\le m<r$}, \\
            k-2(m-r+1) & \text{if $r\le m<r+s$}, \\
            0 & \text{if $m\ge r+s$},
        \end{cases}
    \]
    for $m \in \{0,1,2,\dots,n-1\}$.
    Then $f(A):=g(|A|)$ is a $k$-polymatroid by Lemma~\ref{lem:easy-construction}, and 
     \[
        \Diamond f(\ell-1) 
        =  2 g(\ell) - g(\ell+1) - g(\ell-1)
        = \nabla g(\ell-1)-\nabla g(\ell)
        \ge 2 
    \]
    for all $r \le \ell \le r+s-1$.
    
    Set $L:=\{r,r+1,\ldots,r+s-1\}$. Then for every $\mathcal S\subseteq \bigcup_{\ell\in L}\binom{[n]}{\ell}$, the function $h_{\mathcal S}:=f-\mathbf 1_{\mathcal S}$ is submodular by Lemma~\ref{lem:simultaneous-layer-perturbation}.

    We verify the remaining $k$-polymatroid axioms.
    Since $r \ge 1$, we have $\emptyset \notin \mathcal S$. Hence,
    \[
        h_{\mathcal S}(\emptyset) = f(\emptyset) = g(0) = 0.
    \]
    For $A\subsetneq B \subseteq [n]$, 
    \begin{align*}
        h_{\mathcal S}(B)-h_{\mathcal S}(A)
        &=
        \mathbf{1}_{\mathcal S}(A) - \mathbf{1}_{\mathcal S}(B) + g(|B|) - g(|A|) \\
        &=
        \mathbf{1}_{\mathcal S}(A) - \mathbf{1}_{\mathcal S}(B) + \nabla g(|B|-1) + \dots + \nabla g(|A|),
    \end{align*}
    which can be negative only when $B\in \mathcal{S}$.
    However, if $B\in \mathcal{S}$, then $|B| \le r+s-1$ and thus $\nabla g(|B|-1) \ge 1$.
    Thus, $h_{\mathcal S}(A) \le h_{\mathcal S}(B)$.
    Finally,
    $h_{\mathcal S}(A)\le f(A)\le k|A|$, so $h_{\mathcal S}$ is a
    $k$-polymatroid.

    Distinct choices of $\mathcal S$ give distinct functions
    $h_{\mathcal S}$. We have therefore constructed
    $2^{\sum_{\ell=r}^{r+s-1}\binom n\ell}$
    different $k$-polymatroids.  Since $k$ is fixed and every
    $\ell\in L$ satisfies $\ell=\lfloor n/2 \rfloor +O_k(1)$,
    \[
        \sum_{\ell=r}^{r+s-1}\binom n\ell
        =sN_n(1+o(1))
        =\lfloor k/2\rfloor N_n(1+o(1)).
    \]
    This proves the lower bound.
\end{proof}

\begin{proof}[Proof of Theorem~\ref{thm:asymptotic_counting}]
    This follows from Theorems~\ref{thm:asymptotic_counting-upper} and~\ref{thm:asymptotic_counting-lower}.
\end{proof}

As an immediate application of Theorem~\ref{thm:asymptotic_counting}, we show that almost all $k$-polymatroids are proper.


\begin{restatedtheorem}{thm-intro:proper}
    For each fixed $k \geq 2$, almost all $k$-polymatroids are proper.
\end{restatedtheorem}
\begin{proof}
    Recall that a polymatroid $f$ on $E$ is not proper if and only if $f-q$ is a $1$-polymatroid, where $q(A) := \sum_{x\in A} f(E)-f(E\setminus\{x\})$ for $A\subseteq E$.
    If $f$ is a $k$-polymatroid, then $0 \le q(\{x\}) \le k$ for each $x\in E$.
    Therefore, the number of non-proper $k$-polymatroids is bounded above by $(k+1)^n \cdot p_1(n)$.
    By \cite{BPP2015}, we have $\log p_1(n) = O(N_n/n)$. Thus, by Theorem~\ref{thm:asymptotic_counting}, we have 
    \[ n \cdot \log(k+1) + \log p_1(n) - \log p_k(n) \to -\infty,\]
    and hence
    \[
        \frac{\#\text{non-proper $k$-polymatroids on $[n]$}}{\#\text{$k$-polymatroids on $[n]$}} \to 0
    \]
    as $n \to \infty$.
\end{proof}

\section{Almost all \texorpdfstring{$k$}{k}-polymatroids are connected}\label{sec:connectivity}

We prove that almost all $k$-polymatroids are connected. The key auxiliary estimate, Lemma~\ref{lem:ground-set-growth}, is a recursive lower bound on $p_k(n)$ obtained from principal single-element extensions: duality ensures that at least half of all $k$-polymatroids have large rank, each such polymatroid has many flats, and distinct flats yield distinct principal extensions. We then bound the number of disconnected polymatroids by summing over the size of a smallest component. The extension estimate handles the case in which a polymatroid has a component of bounded size, while Theorem~\ref{thm:asymptotic_counting} shows that the remaining cases also make a negligible contribution.

\begin{lemma}\label{lem:flat-count}
    Let $f$ be a $k$-polymatroid on $E$ and $r:= f(E)$ be the rank of $f$. Then $f$ has at least $2^{\lceil r/k \rceil}$ flats.
\end{lemma}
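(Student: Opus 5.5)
Set $t := \lceil r/k \rceil$. The plan is to find $t$ elements whose closures behave independently, and then map each subset of them injectively to a flat.

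First, greedily choose elements $x_1,\dots,x_t$ such that for each $j$,
\[
f(\{x_1,\dots,x_j\}) > f(\{x_1,\dots,x_{j-1}\}),
\]
that is, $x_j \notin \cl_f(\{x_1,\dots,x_{j-1}\})$. This is possible as long as $f(\{x_1,\dots,x_{j-1}\}) < r$: otherwise every element would lie in the closure of $X_{j-1} := \{x_1,\dots,x_{j-1}\}$. Then submodularity (adding elements one by one, each with zero increment, using that the derivative is decreasing by Lemma~\ref{lem:discrete derivative}) would give $f(E) = f(X_{j-1}) < r$, a contradiction. Since $f(X_{j-1}) \le k(j-1)$, we can keep going while $k(j-1) < r$, i.e.\ for all $j \le t$. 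The result is $X = \{x_1,\dots,x_t\}$, which is \emph{free} in the weak sense that each $x_j$ strictly increases rank over its predecessors.

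The natural next step would be to show that for distinct subsets $T \subseteq X$ the flats $\cl_f(T)$ are distinct. However, this is false for the greedy ordering alone: $x_1$ could lie in $\cl_f(\{x_2,x_3\})$. So I would strengthen the choice to make $X$ \emph{independent} in the sense that no $x_i$ lies in $\cl_f(X \setminus \{x_i\})$, and I expect this to be the main obstacle. For a set $X$ with this property, $T \mapsto \cl_f(T)$ is injective on $2^X$: if $T \ne T'$, say $x \in T \setminus T'$, then $x \in \cl_f(T)$. But $T' \subseteq X \setminus \{x\}$, and closure is monotone (again by the decreasing derivative), so $x \notin \cl_f(T')$. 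This yields $2^{|X|}$ flats.

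To obtain an independent set of size at least $t$, I would take a minimal subset $X \subseteq E$ with $f(X) = r$. Minimality gives $f(X \setminus \{x\}) < r$ for every $x \in X$. Then $x \notin \cl_f(X \setminus \{x\})$, because otherwise $f(X \setminus \{x\}) = f(X) = r$. Moreover $r = f(X) \le k|X|$, so $|X| \ge \lceil r/k \rceil$. Hence this minimal spanning set is independent in the required sense and has the right size, and the greedy step above is unnecessary.

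The only remaining check is that closure is monotone, i.e.\ $T \subseteq T'$ implies $\cl_f(T) \subseteq \cl_f(T')$. This follows since $\nabla_i f(T) = 0$ and $\nabla_i f$ is decreasing and nonnegative, which forces $\nabla_i f(T') = 0$. Each $\cl_f(T)$ is a flat by definition, and the injectivity argument above then gives at least $2^{\lceil r/k\rceil}$ distinct flats.
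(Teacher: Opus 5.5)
Your final argument is correct and is essentially the paper's proof: you take an inclusion-minimal set $X$ with $f(X)=r$, get $|X|\ge\lceil r/k\rceil$ from $f(X)\le k|X|$, and separate $\cl_f(T)$ from $\cl_f(T')$ using some $x\in T\setminus T'$; your closure-monotonicity step is exactly the paper's submodular inequality $f(T'\cup\{x\})-f(T')\ge f(X)-f(X\setminus\{x\})>0$. The greedy construction in your first paragraph is, as you note yourself, unnecessary and can be dropped.
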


\begin{proof}
    Choose an inclusion-minimal spanning set $A\subseteq E$, i.e., $\cl_f(A) = E$ and $\cl_f(A\setminus\{x\}) \ne E$ for all $x\in A$.  Since
    $r=f(A)\le k|A|$, we have $|A|\ge\lceil r/k\rceil$.  

    Now take distinct $I,J\subseteq A$ and, after interchanging them if
    necessary, choose $x\in I\setminus J$.  Then $x\in\cl_f(I)$.
    However, $x\notin\cl_f(J)$ because
    \[
        f(J\cup\{x\})-f(J)\ge f(A)-f(A\setminus\{x\})>0,
    \]
    where the first inequality holds by submodularity and the second inequality holds by our choice of $A$.
    Therefore, $\cl_f(I)\ne\cl_f(J)$. 
    Hence, the closures of the $2^{|A|}$ subsets of $A$ are distinct flats, which proves the claim.
\end{proof}

We next establish a recursive lower bound on the number of $k$-polymatroids. This bound, obtained from principal single-element extensions, will be used to handle disconnected polymatroids with a component of bounded size.

\begin{lemma}
\label{lem:ground-set-growth}
For every $k\ge1$ and $n\ge1$,
\begin{equation}\label{eq:ground-set-growth}
    p_k(n+1)\ge 2^{\lceil n/2\rceil-1}p_k(n).
\end{equation}
Consequently, for $1\le t<n$,
\begin{equation}\label{eq:iterated-ground-growth}
    \frac{p_k(n)}{p_k(n-t)}
    \ge 2^{\,t(2n-t-5)/4}.
\end{equation}
\end{lemma}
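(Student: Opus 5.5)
The plan is to build an injection-like map from pairs $(f,F)$, with $f$ a $k$-polymatroid on $[n]$ of large rank and $F$ a flat of $f$, into $k$-polymatroids on $[n+1]$, using principal extensions (Definition~\ref{def:principal extension}).

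\textbf{Step 1 (the extensions are $k$-polymatroids).} For a flat $F$ of $f$, the principal extension $p_{f,F}$ is a polymatroid by Proposition~\ref{prop:principal extension}. It is a $k$-polymatroid since $p_{f,F}(\{n+1\})\le 1\le k$. For $A\subseteq[n]$ we have $p_{f,F}(A\cup\{n+1\})\le f(A)+1\le k|A|+k$.

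\textbf{Step 2 (distinct flats give distinct extensions).} The flat $F$ can be read off from $p_{f,F}$. For $i\in[n]$, compare $p_{f,F}(\{i,n+1\})$ with $p_{f,F}(\{i\})$: since $F$ is closed, it is more convenient to recover $F$ as the closure of $n+1$. Concretely, $i\in F$ should hold exactly when $p_{f,F}(F\cup\{n+1\})=f(F)$, but that test uses $F$ itself. Instead, take $F$ to be the set of $i$ with $p_{f,F}(([n]\setminus\{i\})\cup\{n+1\})$ versus $f([n]\setminus\{i\})$, or more simply recover $F$ as the unique maximal set $T\subseteq[n]$ with $p_{f,F}(T\cup\{n+1\})=f(T)$. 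For $T\supseteq F$ we get $\min\{f(T)+1,f(T)\}=f(T)$. If $T\not\supseteq F$, pick $x\in F\setminus T$. Then $x\notin\cl_f(T)$? That does not hold in general, so I would argue directly via $f(T\cup F)>f(T)$. If $f(T\cup F)=f(T)$, then $F\subseteq \cl_f(T)$; this set need not equal $T$, but the maximal such $T$ is $[n]$. So the right invariant is the set $\{T : p(T\cup\{n+1\})=f(T)\}=\{T: F\subseteq \cl_f(T)\}$. This family determines $F$, because $F$ is the minimal flat in it: any flat $T$ in the family has $F\subseteq T$, and $F$ itself belongs. Since $f$ is recovered as the restriction to $[n]$, the map $(f,F)\mapsto p_{f,F}$ is injective.

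\textbf{Step 3 (half the polymatroids have rank at least about $kn/2$).} The $k$-dual is an involution on $k$-polymatroids on $[n]$ with $f^*([n])=kn-f([n])$. Hence at least half of all $k$-polymatroids have rank $r\ge kn/2$. For these, Lemma~\ref{lem:flat-count} gives at least $2^{\lceil r/k\rceil}\ge 2^{\lceil n/2\rceil}$ flats. Counting then gives
\[
p_k(n+1)\ge \tfrac12 p_k(n)\,2^{\lceil n/2\rceil}=2^{\lceil n/2\rceil-1}p_k(n).
\]
Pairing $f$ with $f^*$ handles self-dual cases fine, since we only need the ``at least half'' count.

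\textbf{Step 4 (iterating).} Multiplying the bounds for $m=n-t,\dots,n-1$ gives $\log(p_k(n)/p_k(n-t))\ge\sum_{m=n-t}^{n-1}(m/2-1)$. This sum equals $\tfrac12\bigl(tn-t(t+1)/2\bigr)-t=t(2n-t-5)/4$, which is \eqref{eq:iterated-ground-growth}.

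The main obstacle is Step 2: stating the recovery of $F$ cleanly. The invariant is that the flats $T$ with $p_{f,F}(T\cup\{n+1\})=f(T)$ are exactly the flats containing $F$. I would verify the reverse inclusion by noting that a flat $T\not\supseteq F$ has $f(T\cup F)>f(T)$ by the closedness of $T$ and monotonicity.
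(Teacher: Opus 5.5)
Your proposal is correct and follows essentially the same route as the paper: $k$-duality gives at least half the polymatroids rank at least $kn/2$, Lemma~\ref{lem:flat-count} supplies at least $2^{\lceil n/2\rceil}$ flats for each of them, the map $(f,F)\mapsto p_{f,F}$ is injective, and the same telescoping sum yields \eqref{eq:iterated-ground-growth}. Recovering $F$ as the least flat $T$ with $p_{f,F}(T\cup\{n+1\})=f(T)$ is equivalent to the paper's intersection-of-closures argument, and your Step~1 check that $p_{f,F}$ is a $k$-polymatroid fills a small point the paper leaves implicit; the false starts in Step~2 should be cut from a final write-up.
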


\begin{proof}
The $k$-duality involution pairs $k$-polymatroids on $[n]$ of rank $r$ with those of rank $kn-r$.
Hence, at least half of the $k$-polymatroids on $[n]$ have rank at least $kn/2$. By Lemma~\ref{lem:flat-count}, each of them has at least $2^{\lceil n/2\rceil}$ flats.

For each flat $F$ of such a polymatroid $f$, its principal extension $p_{f,F}$ is a polymatroid on $[n+1]$; see Definition~\ref{def:principal extension} and Proposition~\ref{prop:principal extension}.
Deleting the new element $n+1$ recovers $f$, so
extensions coming from different $f$ are distinct.
Recall that $p_{f,F}(A) = \min\{f(A)+1, f(A\cup F)\}$ for $A\subseteq[n]$.
Then
\[
 \{A:p_{f,F}(A\cup \{n+1\})=f(A)\}
 =\{A:F\subseteq\cl_f(A)\}.
\]
The intersection of the closures $\cl_f(A)$ over all sets $A$ in this family is $F$ itself.
Therefore, for a fixed $f$, the extension $p_{f,F}$ determines $F$. Thus, all the principal extensions counted above are distinct, and hence
\[
    p_k(n+1) \ge \frac{1}{2} \cdot 2^{\lceil n/2\rceil} \cdot p_k(n).
\]
This proves \eqref{eq:ground-set-growth}.

Iterating the weaker form
$p_k(s)\ge2^{(s-3)/2}p_k(s-1)$ for
$s=n-t+1,\ldots,n$ together with
\[
 \sum_{s=n-t+1}^n\frac{s-3}{2}
 =\frac{t(2n-t-5)}4
\]
proves \eqref{eq:iterated-ground-growth}.
\end{proof}

\begin{restatedtheorem}{thm-intro:connected}
For every fixed positive integer $k$, asymptotically almost every
$k$-polymatroid is connected.
\end{restatedtheorem}

\begin{proof}
The case $k=1$ is the matroid case; in fact, asymptotically almost every matroid is $3$-connected~\cite{LOSW2013}. We may therefore assume that $k\ge2$.

Let $d_k(n)$ be the number of disconnected $k$-polymatroids on
$[n]$. Every disconnected polymatroid has a smallest component of size at most $n/2$. Summing over all possible sizes $t \leq n/2$ of such a component gives
\begin{equation}\label{eq:disconnected-convolution}
 d_k(n)\le
 \sum_{t=1}^{\lfloor n/2\rfloor}
 \binom nt p_k(t)p_k(n-t).
\end{equation}
Some disconnected polymatroids may be counted more than once, which explains why we obtain an inequality rather than an equality.

Fix $\varepsilon\in(0,1)$.  By Theorem~\ref{thm:asymptotic_counting}, for
all sufficiently large $n$,
\begin{equation}\label{eq:entropy-bounds-for-connectivity}
 \lfloor k/2\rfloor (1-\varepsilon)N_n
 \le \log p_k(n)
 \le k(1+\varepsilon)N_n.
\end{equation}
Choose a fixed integer $T$ large enough that these estimates hold whenever
$n\ge T$ and that
\begin{equation}\label{eq:choose-connectivity-threshold}
 2k(1+\varepsilon)\left(\frac23\right)^T
 \le \frac{\lfloor k/2\rfloor(1-\varepsilon)}2.
\end{equation}

First suppose $1\le t<T$.  For all sufficiently large $n$,
\eqref{eq:iterated-ground-growth} gives
\[
 \frac{p_k(n-t)}{p_k(n)}\le2^{-tn/4}.
\]
Since $T$ is fixed,
\begin{equation}\label{eq:small-components-negligible}
 \sum_{t=1}^{T-1}
 \binom nt p_k(t)\frac{p_k(n-t)}{p_k(n)}
 \le
 \sum_{t=1}^{T-1}n^t p_k(t)2^{-tn/4}
 =o(1).
\end{equation}

Now let $T\le t\le n/2$.  The binomial coefficients $N_s$ are increasing,
and the elementary ratio bound $N_{s-1}/N_s\le2/3$ gives
\[
 N_t\le N_{n-t}\le\left(\frac23\right)^tN_n.
\]
Using \eqref{eq:entropy-bounds-for-connectivity} and
\eqref{eq:choose-connectivity-threshold}, we obtain, uniformly over this range of $t$,
\[
\begin{aligned}
 \log\frac{p_k(t)p_k(n-t)}{p_k(n)}
 &\le k(1+\varepsilon)(N_t+N_{n-t})
      -\lfloor k/2\rfloor(1-\varepsilon)N_n\\
 &\le-\frac{\lfloor k/2\rfloor(1-\varepsilon)}2N_n.
\end{aligned}
\]
The sum of all binomial coefficients is at most $2^n$, and hence
\begin{equation}\label{eq:large-components-negligible}
 \sum_{t=T}^{\lfloor n/2\rfloor}
 \binom nt\frac{p_k(t)p_k(n-t)}{p_k(n)}
 \le 2^{\,n-\lfloor k/2\rfloor(1-\varepsilon)N_n/2}
 =o(1).
\end{equation}

Dividing \eqref{eq:disconnected-convolution} by $p_k(n)$ and combining
\eqref{eq:small-components-negligible} with
\eqref{eq:large-components-negligible} proves
$d_k(n)/p_k(n)\to0$.
\end{proof}

\section{Representability}\label{sec:representability}

We prove Theorem~\ref{thm-intro:representable} by obtaining an upper bound on the number of $k$-polymatroids representable over at least one field; see Theorem~\ref{thm:representable}. Given a representation of a $k$-polymatroid by subspaces $V_1,\ldots,V_n$, we choose at most $k$ spanning vectors from each subspace and regard the resulting $kn$ vectors as a matroid representation. The ranks of unions of the corresponding groups of $k$ vectors recover the original polymatroid, so representable $k$-polymatroids inject into representable matroids on $kn$ elements. Nelson's bound for the latter, together with Theorem~\ref{thm:asymptotic_counting}, shows that representable $k$-polymatroids form an asymptotically negligible family.

\begin{theorem}[\cite{Nelson2018}]\label{thm:number-of-representable-matroids}
    For $n \geq 12$, the number of matroids on $[n]$ that are representable over at least one field is at most $2^{n^3/4}$.
\end{theorem}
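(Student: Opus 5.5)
The plan is to encode each representable matroid by the zero/nonzero pattern of a fixed family of integer polynomials, and then bound the number of such patterns uniformly over all fields. This is an algebraic-geometry/linear-algebra count in the spirit of R\'onyai--Babai--Ganapathy.

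First I reduce to a normal form. Fix a rank $r$ and a field $\mathbb{F}$, and let $M$ be an $\mathbb{F}$-representable matroid of rank $r$ on $[n]$. After permuting the ground set so that $[r]$ is a basis (this costs at most a factor $\binom nr\le 2^n$), $M$ has a representation $[I_r\mid X]$ with $X$ an $r\times(n-r)$ matrix over $\mathbb{F}$. A set $B$ of size $r$ is a basis iff a certain square minor of $X$ is nonzero, namely the one with rows indexed by $[r]\setminus B$ and columns by $B\setminus[r]$. Hence $M$ is determined by the zero/nonzero pattern of the family $\mathcal P$ of all square minors of a generic $r\times(n-r)$ matrix of indeterminates. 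This family has:
\begin{itemize}
\item $N=r(n-r)\le n^2/4$ variables;
\item at most $m\le\binom{n}{r}\le 2^n$ polynomials;
\item each polynomial of degree at most $d=\min(r,n-r)\le n$, with integer coefficients.
\end{itemize}

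The key step is a zero-pattern bound: the number of zero/nonzero patterns of $\mathcal P$ realizable over \emph{some} field is at most $\binom{N+md}{N}$. Over a single field this is the standard argument. For each realized pattern $\sigma$, pick a witness point $u_\sigma$ and let $g_\sigma$ be the product of the polynomials that are nonzero at $u_\sigma$. Ordering patterns by the size of their support, one checks triangularity: $g_\sigma(u_\tau)\neq0$ iff the support of $\sigma$ is contained in that of $\tau$. So the $g_\sigma$ are linearly independent in the space of polynomials of degree $\le md$ in $N$ variables, which has dimension $\binom{N+md}{N}$.

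I expect the main obstacle to be making this uniform over all fields, since the witnesses live in different fields and the triangularity argument does not directly combine them. What I would try first is to work in $\mathbb{Z}[x]$ and reduce modulo the characteristic. For each characteristic $p$ (including $0$), patterns realized over extensions of $\mathbb{F}_p$ (respectively $\mathbb{Q}$) are counted by the bound above over the algebraic closure. To avoid summing over infinitely many $p$, I would use that only finitely many primes divide the relevant resultants, or more cleanly follow Nelson's device: consider patterns over $\overline{\mathbb{Q}}$ together with the primes dividing products of the $g_\sigma$, and argue that the $g_\sigma$ remain independent over $\mathbb{Z}$ after clearing denominators. The total count should then be bounded by a constant multiple of the same binomial.

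Finally I do the arithmetic. We have $\log\binom{N+md}{N}\le N\log\bigl(e(1+md/N)\bigr)$. For $r$ near $n/2$ we have $N/n\approx n/4$, and the bound becomes roughly $\tfrac{n^2}{4}\bigl(n-\log n+O(1)\bigr)$. Summing over the $n+1$ ranks, including the $2^n$ permutation factor, and noting that the extreme ranks give far smaller terms, the saving of order $\tfrac{n^2}{4}\log n$ absorbs all lower-order factors once $n\ge12$. This yields the bound $2^{n^3/4}$.
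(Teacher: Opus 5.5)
The paper gives no proof of this statement; it is quoted from Nelson. Your reduction to zero patterns of the minors of a generic $r\times(n-r)$ matrix, the single-field triangularity argument, and the closing arithmetic are fine. The gap is the step you yourself flag as the main obstacle, and it is not a technicality. The asserted bound $\binom{N+md}{N}$ on the number of patterns realizable over \emph{some} field is false for integer polynomials in general. Take $N=0$ and constants $f_i=c_i:=\prod_{S\ni i}p_S$, with distinct primes $p_S$ indexed by the nonempty $S\subseteq[m]$. In characteristic $p_S$ the zero set is exactly $S$, so all $2^m$ patterns occur, whereas your bound is $1$.

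So no uniform bound in terms of $N,m,d$ alone can hold, and neither of your repairs works. Suppose $\sum_\sigma c_\sigma g_\sigma=0$ with integer $c_\sigma$, and you evaluate at the witness $u_\tau$ of an inclusion-minimal $\tau$. You only learn that the characteristic of the field containing $u_\tau$ divides $c_\tau$, not that $c_\tau=0$. Already for $f_1=2$, the products $2$ and $1$ coming from $\mathbb{Q}$ and $\mathbb{F}_2$ are dependent, so ``independence over $\mathbb{Z}$ after clearing denominators'' fails. Knowing that only finitely many primes behave differently from characteristic $0$ also gives no bound on how many per-field counts you would have to add.

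What does work is to allow only coefficients in $\{-1,0,1\}$. Then the minimal-$\tau$ evaluation gives $\pm g_\tau(u_\tau)=0$, which is impossible in every field. Write $K$ for the number of patterns and $D=\binom{N+md}{N}$. The $2^K$ subset sums of the $g_\sigma$ are then pairwise distinct integer polynomials of degree at most $md$.

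At this point you must use the specific family. A minor of order $s$ has $s!$ monomials with coefficients $\pm1$. Hence each $g_\sigma$ has coefficient $\ell_1$-norm at most $(d!)^m$, and each subset sum has coefficients of absolute value at most $K(d!)^m$. Pigeonhole gives $2^K\le(2K(d!)^m+1)^D$, and with $K\le 2^m$ this yields $K\le D\,(m\log(d!)+m+2)$.

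This extra factor is $2^{O(n)}$ and is absorbed by the $\Theta(n^2\log n)$ saving in your arithmetic. At $n=12$ the total is still roughly $2^{333}$, against $2^{432}$. With that replacement the argument goes through. As written, the one ingredient that distinguishes the statement from a per-field count, namely uniformity over all fields, is not proved.
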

\begin{theorem}\label{thm:representable}
    Let $\ell_k(n)$ be the number of $k$-polymatroids on $[n]$ that are representable over at least one field. Then
    \[\ell_k(n) \leq 2^{k^3 n^3/4}\]
    for sufficiently large $n$.
    In particular, almost all $k$-polymatroids are not representable over any field.
\end{theorem}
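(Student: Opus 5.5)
The plan is to build an injection from $\mathbb{F}$-representable $k$-polymatroids on $[n]$ into matroids on $[kn]$ representable over some field, and then apply Theorem~\ref{thm:number-of-representable-matroids}.

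\textbf{Encoding.} Let $f$ be representable via subspaces $V_1,\dots,V_n$ of $\mathbb{F}^d$. Since $\dim V_i = f(\{i\}) \le k$, each $V_i$ has a spanning list of exactly $k$ vectors $v_{i,1},\dots,v_{i,k}$; if $\dim V_i<k$, pad the list with zero vectors or repeated vectors. Index these $kn$ vectors by $[kn]$, sending $v_{i,j}$ to $(i-1)k+j$. Let $M$ be the $\mathbb{F}$-representable matroid on $[kn]$ defined by this vector configuration. Put $B_i:=\{(i-1)k+1,\dots,ik\}$. For $A\subseteq[n]$ we have
\[
r_M\Bigl(\bigcup_{i\in A}B_i\Bigr)=\dim\Bigl(\sum_{i\in A}V_i\Bigr)=f(A),
\]
so $f$ is determined by $M$ through the fixed partition $\{B_i\}$. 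Hence $f\mapsto M$, after choosing one such $M$ for each $f$, is injective. It follows that $\ell_k(n)$ is at most the number of matroids on $[kn]$ representable over at least one field.

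\textbf{Counting.} For $kn\ge 12$, Theorem~\ref{thm:number-of-representable-matroids} gives $\ell_k(n)\le 2^{(kn)^3/4}=2^{k^3n^3/4}$. For the ``in particular'' part, Theorem~\ref{thm:asymptotic_counting-lower} gives $\log p_k(n)\ge \lfloor k/2\rfloor N_n(1+o(1))$ when $k\ge2$. Since $N_n=\Theta(2^n/\sqrt n)$ grows faster than $k^3n^3/4$, we get $\ell_k(n)/p_k(n)\to0$.

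The case $k=1$ needs separate handling, because there the lower bound in Theorem~\ref{thm:asymptotic_counting} is vacuous. For $k=1$ the statement is Nelson's result for matroids. One can alternatively argue directly: Knuth's bound gives $\log p_1(n)\ge 2^{n}n^{-3/2}/C$, which also dominates $n^3/4$.

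The main care point is the padding step. We need each $B_i$ to have exactly $k$ elements, so that the partition of $[kn]$ is fixed and independent of $f$; the injectivity argument relies on this. The zero vectors used for padding become loops of $M$, which is harmless because a configuration containing zero vectors still defines a representable matroid. Everything else is a direct substitution.
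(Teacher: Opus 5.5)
Your proposal is correct and follows essentially the same route as the paper. The paper likewise pads bases of the $V_i$ with zero vectors to obtain a representable matroid on $kn$ elements, recovers $f$ from the ranks of the blocks, applies Nelson's bound, and for $k\ge 2$ compares with the lower bound of Theorem~\ref{thm:asymptotic_counting-lower}; your explicit treatment of $k=1$ via Nelson's result (or Knuth's bound) fills in a case the paper handles only by its remark in the introduction.
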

\begin{proof}
    For an $\mathbb{F}$-representable $k$-polymatroid $f$ on $[n]$, let $V_1, V_2, \dots, V_n$ be a representation of $f$ over $\mathbb{F}$.
    Since $f$ is a $k$-polymatroid, we have $\dim(V_i) \leq k$ for all $i \in [n]$. 
    Choose vectors $v_{i,1}, v_{i,2}, \dots, v_{i,k}$ in $V_i$ such that $\{v_{i,1}, v_{i,2}, \dots, v_{i, \dim V_i}\}$ is a basis of $V_i$ and $v_{i,j} = 0$ for $j > \dim V_i$.
    Let $M$ be an $\mathbb{F}$-representable matroid on $[n] \times [k]$ represented by the vectors $\{v_{i,j} \mid i \in [n], j \in [k]\}$.
    For $i \in [n]$, let $X_i$ denote the set $\{i\}\times[k]$.
    Then, for each $A\subseteq[n]$, we have
    \[
        f(A) = \dim\left(\sum_{i \in A} V_i\right) = \dim \mathrm{span}(\{v_{i,j} \mid i \in A, j \in [k]\}) = r_M\left(\bigcup_{i \in A} X_i\right). 
    \]
    Consequently, once $M$ is fixed, the formula $f(A) = r_M\left(\bigcup_{i \in A} X_i\right)$ uniquely determines $f$.
    Thus, the number of representable polymatroids is at most the number of representable matroids on $[n] \times [k]$. By Theorem~\ref{thm:number-of-representable-matroids}, the latter is at most $2^{k^3 n^3/4}$.

    Finally, for $k \geq 2$, 
    \[
        \lim_{n \to \infty} \frac{\ell_k(n)}{p_k(n)}
        \leq \lim_{n \to \infty}
        \frac{2^{k^3 n^3/4}}{2^{\lfloor k/2 \rfloor N_n (1+o(1))}}
        = 0.
    \]
\end{proof}

\begin{remark}
    A polymatroid $f$ on $[n]$ is naturally associated with a matroid $N_f$ on $X_1 \sqcup \dots \sqcup X_n$, where $|X_i| := f(\{i\})$ for $i\in [n]$.
    The matroid $N_f$ is called the \emph{multipartite matroid}~\cite{FMP2011} or the \emph{natural matroid}~\cite{BCF2023} associated with $f$.
    Farr{\`a}s, Mart{\'\i}-Farr{\'e}, and Padr{\'o}~\cite{FMP2011} showed that if $N_f$ is representable over a field $\mathbb{F}$, then $f$ is representable over $\mathbb{F}$; conversely, if $f$ is representable over $\mathbb{F}$, then $N_f$ is representable over a sufficiently large extension field $\mathbb{E}$ of $\mathbb{F}$.
    Together with Nelson's theorem, this gives another proof of Theorem~\ref{thm-intro:representable}.
\end{remark}

\begin{remark}
    Another notion of representability for polymatroids was introduced in~\cite{BHKKL2025}; it agrees with the notion used in this paper when restricted to matroids.
    Note that no proper polymatroid is representable over a field in the sense of~\cite{BHKKL2025}, by the near-idempotency principle~\cite[Prop.~4.7]{BHKKL2025} and the fact that fields are not near-idempotent.
    Therefore, by Theorem~\ref{thm-intro:proper}, almost all polymatroids are not representable over any field in the sense of~\cite{BHKKL2025}.
\end{remark}

\section*{Acknowledgments}
SI was supported by a KIAS Individual Grant (AP109501) at the Korea Institute for Advanced Study. DK was partially supported by an AMS--Simons Travel Grant.

\section*{AI Declaration}
The authors used Gemini and GPT to develop the initial idea of the proof of Theorem~\ref{thm:asymptotic_counting}.
The full proof of Theorem~\ref{thm:asymptotic_counting} was written and verified by the authors. 
In the rest of the paper, LLM-based tools are only used to polish the writing.

\providecommand{\bysame}{\leavevmode\hbox to3em{\hrulefill}\thinspace}
\providecommand{\MR}{\relax\ifhmode\unskip\space\fi MR }
\providecommand{\MRhref}[2]{%
  \href{http://www.ams.org/mathscinet-getitem?mr=#1}{#2}
}
\providecommand{\href}[2]{#2}

\end{document}